\newcommand{\diag}{\mathrm{diag}}
\newcommand{\GL}{\mathrm{GL}}
\newcommand{\GO}{\mathrm{GO}}
\newcommand{\GSp}{\mathrm{GSp}}
\def \SL {\mathrm{SL}}
\long\def\symbolfootnote[#1]#2{\begingroup%
\def\thefootnote{\fnsymbol{footnote}}\footnote[#1]{#2}\endgroup}
\newcommand{\tr}{\ensuremath{{}^T\!\!}}
\newcommand{\C}{\mathfrak C}
\newcommand{\E}{\mathcal E}
\newcommand{\F}{\mathcal F}
\newcommand{\tensor}{\otimes}
\newtheorem{theorem}{Theorem}[section]
\newtheorem{lemma}[theorem]{Lemma}
\newtheorem{corollary}[theorem]{Corollary}
\newtheorem{proposition}[theorem]{Proposition}
\newtheorem{definition}{Definition}[section]
\newtheorem*{theorema}{Theorem A}
\newtheorem*{corollarya1}{Corollary A1}
\newtheorem*{corollarya2}{Corollary A2}
\newtheorem*{corollarya3}{Corollary A3}
\theoremstyle{remark}
\newtheorem*{remark}{Remark}
\numberwithin{equation}{section}
\newcommand{\ignore}[1]{}
\newcommand{\mynote}[1]{}
\def \namedlabel#1#2{\begingroup\def \@currentlabel{#2}
\label{#1}\endgroup}
\def\Ddots{\mathinner{\mkern1mu\raise\p@
\vbox{\kern7\p@\hbox{.}}\mkern2mu
\raise4\p@\hbox{.}\mkern2mu\raise7\p@\hbox{.}\mkern1mu}}
\begin{document}
\title{Algorithms in Linear Algebraic Groups}
\author[Bhunia, Mahalanobis, Shinde and Singh]{Sushil Bhunia, Ayan Mahalanobis, Pralhad Shinde, and Anupam Singh}
\AtEndDocument{\bigskip{\footnotesize%
\address{Address: IISER Mohali, Knowledge city, Sector 81, SAS Nagar, P.O. Manauli, Punjab 140306, INDIA.}\par 
\email{Email address: sushilbhunia@gmail.com}\par 
\address{Address: IISER Pune, Dr.~Homi Bhabha Road, Pashan, Pune 411008, INDIA.}\par 
\email{Email address: ayan.mahalanobis@gmail.com}\par 
\email{Email address: pralhad.shinde96@gmail.com}\par 
\email{Email address: anupamk18@gmail.com}\par 
}}
\subjclass[2010]{Primary 11E57, 15A21; Secondary 20G05, 15A66.}
\date{}
\keywords{Symplectic similitude group, orthogonal similitude group, word problem, Gaussian elimination, spinor norm, double coset decomposition}
\begin{abstract} This paper presents some algorithms in linear algebraic groups. These algorithms solve the word problem and compute the spinor norm for orthogonal groups. This gives us an algorithmic definition of the spinor norm. We compute the double coset decomposition with respect to a Siegel maximal parabolic subgroup, which is important in computing infinite-dimensional representations for some algebraic groups.
\end{abstract}
\maketitle
\section{Introduction}
Spinor norm was first defined by Dieudonn\'{e} and Kneser using Clifford algebras. Wall~\cite{wa} defined the spinor norm using bilinear forms. These days, to compute the spinor norm, one uses the definition of Wall. In this paper, we develop a new definition of the spinor norm for split and twisted orthogonal groups. Our definition of the spinor norm is rich in the sense, that it is algorithmic in nature. Now one can compute spinor norm using a Gaussian elimination algorithm that we develop in this paper. 
  This paper can be seen as an extension of our earlier work in the book chapter~\cite{bmss}, where we described Gaussian elimination algorithms for orthogonal and symplectic groups in the context of public key cryptography.

 In computational group theory, one always looks for algorithms to solve the word problem. For a group $G$ defined by a set of generators $\langle X\rangle=G$, the problem is to write $g\in G$ as a word in $X$: we say that this is the \textbf{word problem} for $G$ (for details, see~\cite[Section 1.4]{ob3}). Brooksbank~\cite{brooksbank} and Costi~\cite{costi}  developed algorithms similar to ours for classical groups over finite fields. It is worth noting that, Chernousov et.~al.~\cite{ceg2000} also used Steinberg presentation for Gauss decomposition for Chevalley groups over arbitrary fields. We refer the reader to a book by Carter~\cite[Theorem 12.1.1]{ca} for Steinberg presentation. 
We prove the following: 
\begin{theorema}\namedlabel{maintheorem}{Theorem A}
Let $G$ be a reductive linear algebraic group defined over an algebraically closed field $k$ of $\mathrm{char}\; \neq 2$ which has the Steinberg presentation. 
Then every element of $G$ can be written as a word in Steinberg generators and a diagonal matrix. 
The diagonal matrix is:  $\diag(\alpha,1,\ldots,1,\lambda,\mu(g),\ldots, \mu(g),\mu(g)\lambda^{-1})$, where $\alpha, \lambda, \mu(g)\in k^{\times}$, in its natural presentation. Furthermore, we prove that the length of the word is bounded by $\mathcal{O}(l^3)$, where $l$ is the rank of the group $G$.
\end{theorema}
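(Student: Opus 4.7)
The plan is to perform a Gaussian-type elimination using the Steinberg generators $x_\alpha(t)$ of the root subgroups as elementary operations. For the classical matrix realization relevant to the diagonal form in the statement, each $x_\alpha(t)$ acts as a transvection affecting only a controlled set of rows and columns, and these, together with the Weyl elements $w_\alpha(t) = x_\alpha(t)\, x_{-\alpha}(-t^{-1})\, x_\alpha(t)$ and the coroots $h_\alpha(s) = w_\alpha(s)\, w_\alpha(1)^{-1}$, suffice to drive any $g \in G$ to a single element of the maximal torus $T$.

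The bulk of the argument will be an induction on the rank $l$. Given $g$, I first clear the first column below its $(1,1)$ entry by left-multiplying with negative-root Steinberg elements; if the $(1,1)$ entry itself is zero, I bring a nonzero entry into that position by applying a Weyl element $w_\alpha$. Form preservation (the similitude identity $g^T J g = \mu(g) J$) then forces the last row to acquire a mirrored shape, which a matching sequence of positive-root elements on the right cleans up without disturbing the already-corrected first column. This peels off an outer $2 \times 2$ block and leaves a smaller matrix $g'$ in a similitude group of rank $l - 1$, to which the induction hypothesis applies. The base case $l = 1$ reduces directly to the torus.

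After $l$ layers of descent, I reach a diagonal torus element $d$ constrained by the similitude relations $d_i d_{i'} = \mu(g)$ for each pair $(i, i')$ coupled by the form. The coroot subgroups $h_\alpha(s)$ generate enough of $T$ to rewrite $d$ in the canonical form $\diag(\alpha, 1, \ldots, 1, \lambda, \mu(g), \ldots, \mu(g), \mu(g)\lambda^{-1})$, where the three residual parameters $\alpha, \lambda, \mu(g)$ parametrize $T$ modulo the diagonal subgroup reachable by products of the $h_\alpha(s)$. This diagonal is the irreducible content of $g$ with respect to the Steinberg word problem.

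For the length bound, each column-clearing pass at rank $k$ uses $\mathcal{O}(k)$ Steinberg generators, and the matrix update itself costs $\mathcal{O}(k)$ per generator, giving $\mathcal{O}(k^2)$ per rank-reduction step; summing $\sum_{k = 1}^{l} \mathcal{O}(k^2) = \mathcal{O}(l^3)$. The main obstacle I anticipate is verifying that the first-column elimination is genuinely compatible with simultaneous last-row clearing under the form—the roots whose $x_\alpha(t)$ must be used to kill one entry can couple to others via the Chevalley commutator formula—and handling the types $B_l$, $C_l$, $D_l$ together with their twisted cousins uniformly. The hypothesis $\mathrm{char}\, k \neq 2$ is essential here, both to ensure that the orthogonal root subgroups obey the standard Steinberg relations without degeneracy and to guarantee that the similitude relations produce well-defined quadratic identities for the spinor-norm analysis to follow.
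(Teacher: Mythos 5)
Your proposal is sound and lands at the same normal form, but it organizes the elimination genuinely differently from the paper. The paper's algorithm is block-wise rather than recursive: writing $g=\begin{pmatrix}A&B\\C&D\end{pmatrix}$, it first diagonalizes the \emph{entire} $l\times l$ block $A$ by the paired operations ER1/EC1 (ordinary Gaussian elimination inside the Levi $\GL(l,k)$), then clears $C$ and $B$ wholesale using its symmetry lemmas (Lemma 3.1 through Lemma 3.5), whose content is that form preservation forces the $CA$- and $A^{-1}B$-type blocks to be symmetric (symplectic case) or skew-symmetric (orthogonal case), hence expressible as sums $\left(\sum_m R_m\right)A$ with each $R_m$ a single elementary piece; singular $A$ is handled by the explicit swaps $w_{i,-i}$ of Lemma 3.6, and only at the end is the torus normalized ($h_l(\lambda)$ removes $\lambda$ in $\GSp(2l,k)$ by Lemma 3.7, while in the orthogonal case the surviving $\lambda$ is precisely the spinor norm). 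Your Witt-type descent --- clear one column, let the similitude identity force the mirrored row, recurse into the rank-$(l-1)$ similitude group --- is a valid alternative, and the obstacle you flag is resolvable by exactly the constraints you invoke: once $ge_1=e_1$, the identity $\beta(ge_1,ge_j)=\mu(g)\beta(e_1,e_j)$ makes the $(-1)$-row equal to $\mu(g)$ times a coordinate row for free, and in type $D_l$ the isotropy of $ge_{-1}$ kills the $(1,-1)$ entry, compensating for the absent long root $2\varepsilon_1$. The trade-off: the paper's block method treats the degenerate case $\mathrm{rank}(A)=m<l$ uniformly and reads off $D=\mu(g)\tr A^{-1}$ in one stroke (Lemma 3.4), whereas your recursion is conceptually leaner and yields generation by elementary matrices by a clean induction, but must hunt for pivots over all $2l$ entries of the first column (a Weyl swap may be needed even when the whole top block vanishes) and must re-verify non-interference at every level.

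Two cautions. First, your length count conflates word length with arithmetic cost: the $\mathcal{O}(k)$ matrix-update cost per generator is running time, not letters of the word. By your own accounting you use $\mathcal{O}(k)$ generators per level, hence $\mathcal{O}(l^2)$ generators in total, which is consistent with (indeed stronger than) the stated $\mathcal{O}(l^3)$ once the per-generator update cost is struck from the length count. Second, the twisted groups $\mathrm{O}^{-}(2l,q)$ retain an anisotropic $2\times 2$ block that hyperbolic-pair peeling cannot eliminate; the paper terminates there using the extra generators $x_1(t,s)$ and $x_2$, and your uniform treatment of the ``twisted cousins'' must do the same rather than descend all the way to the torus.
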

\noindent
 We prove this theorem in Section~\ref{proofmaintheorem}. The proof is algorithmic in nature. The method we develop is Gaussian elimination algorithm to solve this problem. Steinberg generators are also called elementary matrices (for details, see Sections~\ref{elementarymate},~\ref{elementarymatsp},~\ref{elementarymato} and~\ref{elementarymattwist}).
A novelty of our algorithm is that we do not need to assume that the Steinberg generators generate the group under consideration. Thus our algorithm independently proves the fact that Chevalley groups are generated by elementary matrices.  Also, this paper can be seen as developing a Gaussian elimination algorithm in reductive algebraic groups. This is to our knowledge the first attempt to develop a Gaussian elimination algorithm for reductive algebraic groups over an  algebraically closed field.

Now we move on to discuss two applications of our algorithm. One is spinor norm and the other is double coset decomposition.
 Murray and Roney-Dougal~\cite{mr} studied computing spinor norm earlier. From our algorithm, one can compute the spinor norm easily (for details see Section~\ref{spinornorm}).~\ref{maintheorem} has the following surprising corollary: 
\begin{corollarya1}\namedlabel{corollary}{Corollary A1}
Let $k$ be a field of ${\rm char} \neq 2$. In the split orthogonal group $\mathrm{O}(n, k)$, the image of $\lambda$ in $k^\times/k^{\times 2}$ is the spinor norm. 
\end{corollarya1}
\noindent
We prove this corollary in Section~\ref{proofcorollary}.
 Since the commutator subgroup of the orthogonal group is the kernel of the spinor norm restricted to the special orthogonal group, the above corollary also gives a membership test for the commutator subgroup in the orthogonal group. In other words, an
element $g$ in the special orthogonal group belongs to the commutator subgroup if and only if the $\lambda$ produced in the Gaussian elimination algorithm is a square in the field.

Furthermore, the spinor norm can also be computed using our algorithm for the twisted orthogonal group. For terminologies of the next result, we refer to Definition~\ref{to-} and Section~\ref{elementarymattwist}.
\begin{corollarya2}\namedlabel{corollary11}{Corollary A2}
Let $g\in \mathrm{O}^{-}(2l,q)$, then the spinor norm $\Theta(g)$ of $g$ is the following:
\begin{equation*}\label{twistspinornorm}
\Theta(g)=\left\{\begin{array}{ll}
\lambda(1-t)\mathbb{F}_q^{\times 2} & \text{when}\; \mathrm{det}(g)=1,\\
2\epsilon\lambda(1-t)\mathbb{F}_q^{\times 2} & \text{when}\; \mathrm{det}(g)=-1.
\end{array}\right.
\end{equation*}
Here $t\in \mathbb{F}_q$.
\end{corollarya2}
We prove this corollary in Section~\ref{proofcorollary11}.
So, we have an efficient algorithm to compute the spinor norm.

Suppose we want to construct infinite-dimensional representations of reductive linear algebraic groups. One way to construct such representations is parabolic induction. Let $P$ be a parabolic subgroup of $G$ with Levi decomposition $P=MN$, where $M$ is the maximal reductive subgroup of $P$ and $N$ is the unipotent radical of $P$. Then a representation of the Levi subgroup $M$ can be inflated to $P$ which acts trivially on $N$. Then we use the parabolic induction to get a representation of $G$ from the representation of $P$ actually from $M$. For instance, one uses the Siegel maximal parabolic subgroups to construct infinite-dimensional representations. Since the same Levi subgroup can lie in two non-conjugate parabolic subgroups, one uses double coset decomposition of $P$ to remedy the situation. Therefore the Levi decomposition as above does not depend on the choice of a parabolic containing $M$. 
Our algorithm can be used to compute the \textbf{double coset decomposition} corresponding to the Siegel maximal parabolic subgroup (for details, see \cite{ca2}). We have the following: 
\begin{corollarya3}\namedlabel{thmbruhatdecomposition}{Corollary A3}
	Let $P$ be the Siegel maximal parabolic subgroup in $G$, where $G$ is either $\mathrm{O}(n,k)$ or
	$\mathrm{Sp}(n,k)$.  Let $g\in G$. Then there is an efficient algorithm
	to determine $\omega$ such that $g\in P\omega P$. Furthermore, $\widehat
	W$ the set of all $\omega$ is a finite set of $l+1$ elements, where $n=2l$ or $2l+1$.
\end{corollarya3}
We prove this corollary in Section~\ref{proofbruhat}. We hope this will shed some light in the infinite-dimensional representations of linear algebraic groups.
\section{Preliminaries}\label{basics}
In this section, we fix some notations and terminologies for this paper. We denote the transpose of a matrix $X$ by $\tr{X}$.
\subsection{Algebraic groups}
Algebraic groups have a distinguished history. Their origin can be traced back to the work of Cartan and Killing but we do not discuss the history of the subject here as it is quite complex. Here we just  mention Chevalley who have made pioneering contributions to this field, see for example~\cite{chevalleytohoku,chevalley}. In this paper, we develop some algorithms for reductive linear algebraic groups.
There are several excellent references on this topic. Here we follow Humphreys~\cite{humphreys}.
We fix a perfect field $k$ of $\mathrm{char} \neq 2 $ for this section, 
and $\bar{k}$ denotes the algebraic closure of $k$.
An \emph{algebraic group} $G$ defined over $\bar{k}$ is a 
group as well as an affine variety over $\bar{k}$ such that the maps 
$\mu \colon G\times G \rightarrow G$, and $i \colon G \rightarrow G$ 
given by $\mu(g_1,g_2)=g_1g_2$, and $i(g)=g^{-1}$ are morphisms of 
varieties. An \emph{algebraic group} $G$ is defined over $k$, 
if the polynomials defining the underlying affine variety $G$ 
are defined over $k$, with the maps $\mu$ and $i$ defined over 
$k$, and the identity element $e$ is a $k$-rational point of $G$. 
We denote the $k$-rational points of $G$ by $G(k)$. 
Any algebraic group
$G$ is a closed subgroup of $\GL(n,k)$ for some $n$. 
Hence algebraic groups are called \emph{linear algebraic groups}.

 The \emph{radical} of an algebraic group $G$ over $k$ is defined 
to be the largest closed, connected, solvable, normal subgroup of $G$, denoted by $R(G)$.
We call $G$ to be a \emph{semisimple} algebraic group if $R(G)=\{e\}$. 
The \emph{unipotent radical} of $G$ is defined to be the largest, closed, connected, unipotent, normal subgroup of $G$, denoted by $R_u(G)$.
We call a connected group $G$ to be \emph{reductive} if $R_u(G)=\{e\}$. 
For example, the group $\GL(n,k)$ is a reductive group, whereas $\SL(n,k)$ is 
a semisimple group. A semisimple algebraic group is always a reductive group.
In the next section, we see more examples of algebraic groups, namely, classical groups.
\subsection{Similitude groups}
In this section, we follow Grove~\cite{gr} and Knus et al.~\cite{KMRT} and 
define two important classes of groups which preserve a certain bilinear form.
Let $V$ be an $n$-dimensional vector space over $k$, where  $n=2l$ or $n=2l+1$ and $l\geq 1$.  Let $\beta\colon V\times V \rightarrow k$ be
a bilinear form. By fixing a basis of $V$ we can associate a matrix to $\beta$. With abuse of notation, we denote the matrix of the bilinear form by $\beta$ itself. Thus $\beta(x,y)=\tr x\beta y$, where $x,y$ are column vectors. We work with the non-degenerate bilinear forms, i.e., $\det\beta\neq 0$. A symmetric (resp. skew-symmetric) bilinear form $\beta$ satisfies $\beta=\tr\beta$ (resp. $\beta=-\tr\beta$). By fixing a basis for $V$, we identify $\mathrm{GL}(V)$ with $\mathrm{GL}(n, k)$ and treat symplectic and orthogonal similitude groups as subgroups of the general linear group $\mathrm{GL}(n, k)$. 
\subsubsection{Symplectic similitude groups}
Up to equivalence, there is a unique non-degenerate skew-symmetric bilinear form over a field $k$~\cite[Corollary 2.12]{gr}. Moreover, a non-degenerate skew-symmetric  bilinear form exists only in even dimension. Fix a basis of $V$ as $\{e_1,\ldots, e_l, e_{-1},\ldots, e_{-l}\}$ so that the matrix $\beta$ is: 
\begin{equation}\label{beta1}
\beta=\begin{pmatrix}0&I_l\\ -I_l&0\end{pmatrix}.
\end{equation}
\begin{definition}\label{defsymplecticgroup}
The {\normalfont symplectic group} is defined for $n=2l$ as 
$$\mathrm{Sp}(n,k):=\{ g \in \GL(n,k) \mid \tr g\beta g=\beta \}, 
\text{ where } \beta=\begin{pmatrix}
0 & I_l \\ -I_l & 0
\end{pmatrix}.$$	
\end{definition}

\begin{definition}
	The {\normalfont symplectic similitude group} with respect to $\beta$ 
	(as in Equation \eqref{beta1}), is defined by
	\[\GSp(n,k)=\{g\in \GL(n,k) \mid \tr g \beta g=\mu(g)\beta,\, \text{ for some } \mu(g) \in k^{\times}\},\] 
	where $\mu : \GSp(n,k) \rightarrow k^{\times}$  
	is a group homomorphism with $\mathrm{ker}\;\mu=\mathrm{Sp}(n,k)$ and the factor $\mu(g)$ is called the \textit{multiplier} of $g$.
\end{definition}
\subsubsection{Orthogonal similitude groups}  
We work with the following non-degenerate symmetric bilinear forms: 
Fix a basis $\{e_0,e_1,\ldots,e_l, e_{-1},\ldots, e_{-l}\}$ for odd dimension and $\{e_1,\ldots, e_l, e_{-1},\ldots, e_{-l}\}$ for even dimension so that the matrix $\beta$ is: 
\begin{equation}\label{beta2}
\beta=\left\{\begin{array}{ll}
\begin{pmatrix}0&I_l\\ I_l&0\end{pmatrix} & \text{when}\; n=2l,\\
\begin{pmatrix}2&0&0\\ 0&0&I_l\\ 0&I_l&0\end{pmatrix} & \text{when}\; n=2l+1.
\end{array}\right.
\end{equation}
The above form $\beta$ exists on every field and the form is unique up to equivalence and is called the \textit{split form} (see~\cite[Chapter 2]{Bh}).
\begin{definition}
The {\normalfont orthogonal group} is defined as 
\[ \mathrm{O}(n,k):=\{ g \in \GL(n,k) \mid \tr g\beta g=\beta \}, \text{ where } \beta \text{ as in Equation }\eqref{beta2}.\]
\end{definition}
\begin{definition}
	The {\normalfont orthogonal similitude group} with respect to $\beta$ (as in Equation~\eqref{beta2}) is defined by
	\[\GO(n,k)=\{g\in \GL(n,k) \mid \tr g \beta g=\mu(g)\beta, \text{ for some }\; \mu(g) \in k^{\times} \},\] 
	where $\mu : \GO(n,k)\rightarrow k^{\times}$ 
	is a group homomorphism with $\mathrm{ker}\;\mu=\mathrm{O}(n,k)$ and the factor $\mu(g)$ is called the \textit{multiplier} of $g$.
\end{definition}
Next, we define the twisted analog of the orthogonal group. 
We talk about twisted form only when $k=\mathbb{F}_q$, a finite field. For the twisted form, we fix a basis $\{e_1, e_{-1}, e_2,\ldots, e_l, e_{-2},\ldots, e_{-l}\}$  so that the matrix $\beta$ is: 
\begin{equation}\label{twisted_beta}
\beta=\begin{pmatrix}
\beta_{0}&0&0\\
0&0&I_{l-1}\\
0&I_{l-1}&0
\end{pmatrix}
\end{equation} where 
$\beta_{0}=\begin{pmatrix}
1&0\\
0&\epsilon
\end{pmatrix} $ and $\epsilon$ is a fixed non-square in $k^{\times}$, i.e., $\epsilon \in \mathbb{F}_q^{\times}\setminus\mathbb{F}_q^{\times 2}$.
\begin{definition}\label{to-}
The {\normalfont twisted orthogonal group} is defined as 
\[ \mathrm{O}^{-}(n,q)=\mathrm{O}^{-}(2l,q):=\{ g \in \GL(n,q) \mid \tr g\beta g=\beta \}, \text{ where } \beta \text{ as in Equation }(\ref{twisted_beta}).\]
\end{definition}
\begin{definition}
	The {\normalfont twisted orthogonal similitude group} with respect to $\beta$ (Equation~\eqref{twisted_beta}), is defined by
	\[\GO^{-}(n,q)=\{g\in \GL(n,q) \mid \tr g \beta g=\mu(g)\beta, \text{ for some }\; \mu(g) \in \mathbb{F}_q^{\times} \},\] 
	where $\mu : \GO^{-}(n,q)\rightarrow \mathbb{F}_q^{\times}$ 
	is a group homomorphism with $\mathrm{ker}\;\mu=\mathrm{O}^{-}(n,q)$. 
\end{definition}

\subsection{Clifford algebra}
Clifford algebras are far-reaching generalizations of the classical Hamiltonian quaternions. One motivation to study Clifford algebras comes from the Euclidean rotational groups.
For details, we refer to the reader \cite[Chapters 8 and 9]{gr}. 
Let $(V, \beta)$ be a quadratic space.
Let \[\mathrm{C}(V, \beta)=\frac{T(V)}{\langle x\tensor x-\beta(x, x).1 \mid x\in V\rangle}\]
be the {\it Clifford algebra}, where $T(V)$ is the tensor algebra.
Then $\mathrm{C}(V,\beta)$ is $\mathbb Z/2\mathbb Z$-graded algebra, say, $\mathrm{C}(V,\beta)=\mathrm{C}_0(V,\beta)\oplus \mathrm{C}_1(V,\beta)$.
The subalgebra $C_0(V,\beta)$ is called {\it special Clifford algebra} 
and it is a Clifford algebra in its own right. Then there is a unique anti-automorphism, say $\alpha : \mathrm{C}(V, \beta) \rightarrow \mathrm{C(V, \beta)}$ such that $\alpha|_{V}=Id_V$ (see, for example, \cite[Proposition 8.15.]{gr}). Now suppose that $u_1, u_2, \ldots, u_m$ are non-zero anisotropic vectors in $V$ such that $\rho_{u_1}\rho_{u_2}\cdots\rho_{u_m}=Id_V$, then $\prod_{i=1}^m \frac{1}{2}\beta(u_i, u_i)\in k^{\times 2}$ (for details, see \cite[Proposition 9.1.]{gr}). So from the above, we get a well-defined map from the orthogonal group to $k^{\times}/k^{\times 2}$ using Cartan-Dieudonne theorem. This map is called the \textit{spinor norm} on orthogonal group. See the next section for precise definition.

\subsubsection{Spinor norm}\label{spinornorms}
It is well-known that $\mathrm{P\Omega}(V)$ is a  simple group if $V$ contains an isotropic vector and dimension of $V$ is at least $5$ but we do not know when $-I$ is an element of the commutator subgroup of the orthogonal group. Then the theory of spinor norm comes into play via  Clifford algebra, for example, see Artin~\cite[Chapter V, Page 193]{Ar} or L. C. Grove~\cite[Chapter 9, Page 76]{gr}. Here we use the theory of spinor norm developed by G. E. Wall~\cite{wa}. For details and the connection between Clifford algebras and Wall's approach, refer to a nice article by R. Lipschitz~\cite{lips}.

\subsubsection{Classical spinor norm}
The classical way to define the spinor norm is via Clifford algebras~\cite[Chapters 8 and 9]{gr}. For $u\in V$ with $\beta(u,u)\neq 0$, we define the \textit{reflection} $\rho_u$ in the hyperplane orthogonal to $u$
by $\rho_{u}(v)=v-2\frac{\beta(u,v)}{\beta(u,u)}u$, which is an element of the orthogonal group. We know  
that every element of the orthogonal group 
$\mathrm{O}(n,k)$ can be written as a product of at most $n$ reflections.
\begin{definition}
	The {\normalfont spinor norm} is a group homomorphism 
	$\Theta : \mathrm{O}(n,k)\rightarrow k^{\times}/k^{\times2}$ defined by  
	$\Theta(g):=\displaystyle \left(\prod_{i=1}^m \frac{1}{2}\beta(u_{i},u_i)\right)\cdot k^{\times2}$, 
	where $g=\rho_{u_1}\cdots \rho_{u_m}$ is written as a product of reflections.
\end{definition}
However, in practice, it is difficult to use the above definition to compute the spinor norm.
\subsubsection{Wall's spinor norm}
 Wall~\cite{wa}, Zassenhaus~\cite{za} and Hahn~\cite{ha} developed a theory to compute the spinor norm. So we now define the spinor norm using Wall's idea. For our exposition, we follow~\cite[Chapter 11]{ta}. For more details on spinor norm using Wall's idea, see Bhunia \cite[Chapter 4, Page 41]{Bh}.

Let $g$ be an element of the orthogonal group. Let $\tilde g=I-g$ and $V_g=\tilde g(V)$. Using $\beta$ we define Wall's bilinear form $[\ ,\ ]_g$ on $V_g$ as follows:
$$[u,v]_g=\beta(u,y), \text{ where }\ v=\tilde g(y).$$
This bilinear form satisfies following properties:
\begin{enumerate}
	\item $[u,v]_g+[v,u]_g=\beta(u,v)$ for all $u,v\in V_g$.
	\item $g$ is an isometry on $V_g$ with respect to $[\ ,\ ]_g$.
	\item $[v,u]_g=-[u,gv]_g$ for all $u,v\in V_g$.
	\item $[\ ,\ ]_g$ is non-degenerate. 
\end{enumerate}
Then the \textit{spinor norm} is 
$$\Theta(g)= \overline{\mathrm{disc}(V_g,[\ ,\ ]_g)}\;  \text{if}\; g\neq I$$ 
extended to $I$ by defining $\Theta(I)=\overline 1$.
An element $g$ is called \textit{regular} if $V_g$ is a non-degenerate subspace of $V$ with respect to the form $\beta$. 
Hahn~\cite[Proposition 2.1]{ha} proved that for a regular element $g$, the spinor norm is $\Theta(g)=\overline{\det(\tilde g|_{V_g})\text{disc}(V_g)}$. This gives,
\begin{proposition}\label{propospinor}
	\begin{enumerate}
		\item For a reflection $\rho_v$, $\Theta(\rho_v)=\frac{\overline{\beta(v,v)}}{2}$.
		\item For a unipotent element $g$ the spinor norm is trivial, i.e., $\Theta(g)=\overline{1}$.
	\end{enumerate}
\end{proposition}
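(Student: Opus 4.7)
The plan splits cleanly along the two parts, with the first being a direct application of Hahn's formula and the second requiring a return to Wall's definition.

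For part (1), I would work directly from Hahn's formula quoted just above the statement. Given a reflection $\rho_v$ with $\beta(v,v) \neq 0$, a short calculation from $\rho_v(x) = x - 2\frac{\beta(v,x)}{\beta(v,v)} v$ shows that $\tilde\rho_v(x) = 2\frac{\beta(v,x)}{\beta(v,v)} v$, so $V_{\rho_v} = kv$, a one-dimensional subspace. Since $\beta(v,v) \neq 0$, the restriction $\beta|_{V_{\rho_v}}$ is non-degenerate, so $\rho_v$ is regular. Applying $\tilde\rho_v$ to $v$ gives $\tilde\rho_v(v) = 2v$, whence $\det(\tilde\rho_v|_{V_{\rho_v}}) = 2$; and $\mathrm{disc}(V_{\rho_v}) = \overline{\beta(v,v)}$ since $V_{\rho_v}$ is one-dimensional spanned by $v$. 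Hahn's formula then yields $\Theta(\rho_v) = \overline{2\beta(v,v)}$, which equals $\overline{\beta(v,v)/2}$ because $4 \in k^{\times 2}$.

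For part (2), the situation is more delicate because for unipotent $g$, $\tilde g = I - g$ is nilpotent, so $\det(\tilde g|_{V_g}) = 0$; thus unipotents are \emph{not} regular and Hahn's formula is not available. I would therefore return to Wall's original definition $\Theta(g) = \overline{\mathrm{disc}(V_g, [\ ,\ ]_g)}$ and show directly that Wall's form has square discriminant. Writing $g = I + N$ with $N$ nilpotent, the orthogonality relation $\tr N\, \beta + \beta N + \tr N\, \beta N = 0$ gives a closed-form expression for $[\ ,\ ]_g$ on $V_g = N(V)$. Selecting a basis $\{N w_1, \ldots, N w_r\}$ for $V_g$ (with $r = \mathrm{rank}\, N$), the Wall form reduces to a Gram matrix whose entries are of the form $\pm\beta(N w_i, w_j)$, and the task is to show that the determinant of this matrix lies in $k^{\times 2}$.

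The main obstacle is handling this discriminant calculation for an arbitrary nilpotent $N$. I expect the cleanest route is to reduce to generators: in $\mathrm{char}\, k \neq 2$, the unipotent subgroup of the orthogonal group is generated by Siegel (or Eichler) transformations, i.e.\ root-subgroup elements attached to isotropic data. For each such generator, $V_g$ is totally isotropic for $\beta$ and Wall's form is the restriction of a natural non-degenerate pairing between $V_g$ and a complementary isotropic subspace, whose matrix is visibly of determinant $1$. Since $\Theta$ is a group homomorphism to $k^\times/k^{\times 2}$, triviality on these generators propagates to all unipotent $g$, completing part (2).
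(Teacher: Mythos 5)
Your part (1) is correct and reaches the paper's answer by a slightly different route: you invoke Hahn's regular-element formula, computing $\det(\tilde\rho_v|_{V_{\rho_v}})=2$ and $\mathrm{disc}(V_{\rho_v})=\overline{\beta(v,v)}$, so that $\Theta(\rho_v)=\overline{2\beta(v,v)}=\overline{\beta(v,v)/2}$, whereas the paper evaluates Wall's form directly on $V_{\rho_v}=\langle v\rangle$: solving $\tilde\rho_v(y)=v$ gives $\beta(v,y)=\beta(v,v)/2$, so $[v,v]_{\rho_v}=\beta(v,v)/2$ is already the discriminant. The two computations are equally short, and your observation that reflections are regular while nontrivial unipotents cannot be is accurate. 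For part (2), note that the paper offers no argument at all: it simply cites Hahn \cite[Corollary 2.2]{ha}. You are therefore attempting more than the paper does, but your sketch has two genuine gaps.

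First, the unipotent elements of $\mathrm{O}(n,k)$ do not form a subgroup, so ``the unipotent subgroup of the orthogonal group is generated by Siegel (or Eichler) transformations'' is not a meaningful reduction as stated; what your homomorphism argument actually needs is that \emph{each individual} unipotent element factors as a product of Eichler transformations. Over an algebraically closed field this follows by conjugating the element into a maximal unipotent subgroup generated by the root subgroups, but Proposition~\ref{propospinor} is applied in Section~\ref{spinornorm} over an arbitrary field of odd or zero characteristic, where this containment is a nontrivial Borel--Tits type statement that you would have to prove or cite. Second, your final step fails as stated: $V_g$ is \emph{not} totally isotropic for every Eichler generator. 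The short-root generator $x_{i,0}(t)$ of $\mathrm{O}(2l+1,k)$ is the Eichler map $E_{u,v}$ with $u=e_i$ isotropic but $v=te_0$ anisotropic, and one computes $V_g=\langle e_i,e_0\rangle$ with $\beta(e_0,e_0)=2\neq 0$. The conclusion survives, but only by computation: in the basis $\tilde g(e_0)=-2te_i$, $\tilde g(e_{-i})=te_0+t^2e_i$, Wall's Gram matrix is $\left(\begin{smallmatrix}0&-2t\\ 2t&t^2\end{smallmatrix}\right)$, of determinant $4t^2\in k^{\times 2}$ --- not via your ``visibly of determinant $1$'' hyperbolic-pairing argument, which applies only when both defining vectors of the Eichler map are isotropic. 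So part (2) needs repair on both counts; the economical fix is the paper's, namely to quote Hahn's Corollary 2.2, whose proof handles an arbitrary unipotent directly from Wall's form.
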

\begin{proof}
\begin{enumerate}
\item Let $\rho_{v}$ be a reflection in the hyperplane orthogonal to $v$, i.e., $\rho_v\in \mathrm{O}(n,k)$.
Then $V_{\rho_{v}}=\langle v \rangle$, therefore 
$\overline{\mathrm{disc}(V_{\rho_v},[\ ,\ ]_{\rho_v})}=\overline{\mathrm{det}([v, v]_{\rho_{v}})}=\frac{\overline{\beta(v,v)}}{2}$. 
Hence $\Theta(\rho_{v})=\frac{\beta(v,v)}{2}k^{\times2}$. 
\item See \cite[Corollary 2.2]{ha} for proof.
\end{enumerate}
\end{proof}
In this direction, we show that the Gaussian elimination algorithm we develop outputs the spinor norm (see Section~\ref{spinornorm}).
\section{Solving the word problem in classical reductive groups}\label{wordproblem}
Let $G$ be a reductive linear algebraic group over $k$. Then using root datum, $G$ is of $4$-classical types $A_l, B_l, C_l$ and $D_l$, and $5$-exceptional types $G_2, F_4, E_6, E_7$ and $E_8$ respectively. In this paper, we solve the word problem for classical type groups. The groups that correspond to these types are: 
\begin{itemize}
\item ($A_l$-type): $\GL(l+1,k)$, 

\item ($B_l$-type): $\GO(2l+1,k)$,

\item ($C_l$-type): $\GSp(2l, k)$,

\item ($D_l$-type): $\GO(2l,k)$.
\end{itemize}
 Let $G=\GL(l+1,k)$ be a general linear group, then one has a well-known algorithm to solve the word problem -- the Gaussian elimination. One observes that the effect of multiplying an element of the general linear group by an elementary matrix (also known as elementary transvection) from left or right is a row or a column operation respectively. Using this algorithm one can start with any matrix $g\in\mathrm{GL}(l+1, k)$ and get $\mathrm{diag}(1,\ldots,1,\mathrm{det}(g))$. Thus writing $g$ as a product of elementary matrices and a diagonal matrix. One objective of this paper is to discuss a similar algorithm for symplectic and orthogonal similitude groups.  

We first describe the elementary matrices and the elementary operations for the symplectic and orthogonal similitude groups. These elementary operations are nothing but multiplication by elementary matrices from left and right respectively. The elementary matrices used here are nothing but the Steinberg generators which follows from the theory of Chevalley groups.
For simplicity, we will write the algorithm for $\GSp(2l,k), \GO(2l, k)$, $\GO(2l+1, k)$, and $\GO^{-}(2l, k)$ separately.

\subsection{Elementary matrices and elementary operations}
In what follows, the scalar ${\rm t}$ varies over the field $k$ and $n=2l$ or $n=2l+1$. 
We define $te_{i,j}$ as the $n\times n$ matrix with $t$ in the $(i,j)$ position and zero everywhere else, where $1\leq i,j \leq l$. 
We simply use $e_{i,j}$ to denote $1e_{i,j}$. 
We often use the well-known matrix identity 
$e_{i,j}e_{k,l}=\delta_{j,k}e_{i,l}$, where 
$\delta_{j,k}$ is the Kronecker delta.
For more details on elementary matrices, we refer ~\cite[Chapter 11]{ca}.
\subsubsection{Elementary matrices for $\GO(2l,k)$}\label{elementarymate}
We index rows and columns by $1, \ldots, l, -1, \ldots, -l$. 
The elementary matrices are defined as follows:
\begin{eqnarray*}
	x_{i,j}(t) =& I + t(e_{i,j} - e_{-j,-i}) & \text{for $i \neq j$},\\
	x_{i,-j}(t) =& I + t(e_{i,-j} - e_{j,-i})& \text{for $i < j$},\\
	x_{-i,j}(t) =& I + t(e_{-i,j} - e_{-j,i}) & \text{for $i < j$},\\
	w_l =& I - e_{l,l} - e_{-l,-l} - e_{l,-l} - e_{-l,l}. 
\end{eqnarray*}
We write $g \in \mathrm{GO}(2l,k)$ as 
$g=\begin{pmatrix} A&B\\C&D\end{pmatrix}$, where $A,B,C$ and $D$ are $l \times l$ matrices. As $\tr g\beta g=\mu(g)\beta$, then we have $\tr AC+\tr CA=0=\tr BD+\tr DB$ and $\tr AD+\tr CB=\mu(g)I_l$. Let us note the effect of multiplying $g$ by elementary matrices in the following table.
\begin{table}[ht]
	\caption{The elementary operations for $\mathrm{GO}(2l, k)$} \label{Table 1} \vskip 0.2 cm
	\centering 
	\begin{tabular}{|c c||c c |} 
		\hline 
		& Row operations & &Column operations  \\ [0.5ex]
		\hline 
		ER1 & $i^{\text{th}} \mapsto i^{\text{th}}+t j^{\text{th}}$ row and & EC1 & $j^{\text{th}} \mapsto j^{\text{th}}+t i^{\text{th}}$ column and \\
		& $-j^{\text{th}} \mapsto -j^{\text{th}}-t (-i)^{\text{th}}$  row & &$-i^{\text{th}} \mapsto -i^{\text{th}}-t(-j)^{\text{th}}$  column\\
		\hline
		ER2 & $i^{\text{th}} \mapsto i^{\text{th}}+t (-j)^{\text{th}}$ row and & EC2 & $-i^{\text{th}} \mapsto -i^{\text{th}}-tj^{\text{th}}$ column and \\
		&$j^{\text{th}} \mapsto j^{\text{th}}-t (-i)^{\text{th}}$  row & &$-j^{\text{th}} \mapsto -j^{\text{th}}+ti^{\text{th}}$  column\\
		\hline
		ER3 &$-i^{\text{th}} \mapsto -i^{\text{th}}-tj^{\text{th}}$ row and & EC3 & $j^{\text{th}} \mapsto j^{\text{th}}+t(-i)^{\text{th}}$ column and \\
		&$-j^{\text{th}} \mapsto -j^{\text{th}}+ti^{\text{th}}$  row & & $i^{\text{th}} \mapsto i^{\text{th}}-t(-j)^{\text{th}}$  column\\
		\hline
	\end{tabular}
\end{table} 
\subsubsection{Elementary matrices for $\mathrm{GSp}(2l,k)$}\label{elementarymatsp}
We index rows and columns by $1, \ldots, l, -1, \ldots, -l$. 
The elementary matrices are as follows:
\begin{eqnarray*}
	x_{i,j}(t)=&I+t(e_{i,j}-e_{-j,-i}) &\text{for}\;  i\neq j,\\
	x_{i,-j}(t)=&I+t(e_{i,-j}+e_{j,-i})&\text{for}\;  i<j,\\ 
	x_{-i,j}(t)=&I+t(e_{-i,j}+e_{-j,i})& \text{for}\;  i<j,\\
	x_{i,-i}(t)=&I+te_{i,-i}, \\
	x_{-i,i}(t)=&I+te_{-i,i}, 
\end{eqnarray*}
 We write $g \in \mathrm{GSp}(2l,k)$ as 
$g=\begin{pmatrix} A&B\\C&D\end{pmatrix}$, where $A,B,C$ and $D$ are $l \times l$ matrices. Let us note the effect of multiplying $g$ by elementary matrices in the following table.
\begin{table}[ht]
\centering
		\caption{The elementary operations for $\mathrm{GSp}(2l,k)$} \label{Table 2} \vskip 0.2 cm
		\begin{tabular}{|cc||cc|}
			\hline
			&Row operations && Column operations\\
			\hline
			ER1&$i\textsuperscript{th}\mapsto i\textsuperscript{th} + tj\textsuperscript{th}$ row and 
			&EC1& $j\textsuperscript{th}\mapsto j\textsuperscript{th} + ti\textsuperscript{th}$ column and\\ &$-j\textsuperscript{th}\mapsto -j\textsuperscript{th} +t(-i)\textsuperscript{th}$ row && $-i\textsuperscript{th}\mapsto -i\textsuperscript{th} + t(-j)\textsuperscript{th}$ column \\
			\hline
			ER2&$i\textsuperscript{th}\mapsto i\textsuperscript{th} + t(-j)\textsuperscript{th}$ row and 
			&EC2 & $-i\textsuperscript{th}\mapsto -i\textsuperscript{th} + tj\textsuperscript{th}$ column and\\ & $j\textsuperscript{th}\mapsto j\textsuperscript{th} +t(-i)\textsuperscript{th}$ row && $-j\textsuperscript{th}\mapsto -j\textsuperscript{th} + ti\textsuperscript{th}$ column\\
			\hline
			ER3&$-i\textsuperscript{th}\mapsto -i\textsuperscript{th} + tj\textsuperscript{th}$ row and 
			&EC3& $j\textsuperscript{th}\mapsto j\textsuperscript{th} + t(-i)\textsuperscript{th}$ column and\\ & $-j\textsuperscript{th}\mapsto -j\textsuperscript{th} +ti\textsuperscript{th}$ row && $i\textsuperscript{th}\mapsto i\textsuperscript{th} + t(-j)\textsuperscript{th}$ column\\
			\hline
			ER1a&$i\textsuperscript{th}\mapsto i\textsuperscript{th} + t(-i)\textsuperscript{th}$ row &EC1a& $-i\textsuperscript{th}\mapsto -i\textsuperscript{th}+ti\textsuperscript{th}$ column\\
			\hline
			ER2a&$-i\textsuperscript{th}\mapsto -i\textsuperscript{th} + ti\textsuperscript{th}$ row &EC2a& $i\textsuperscript{th}\mapsto i\textsuperscript{th}+t(-i)\textsuperscript{th}$ column\\
			\hline
		\end{tabular}
\end{table}

\subsubsection{Elementary matrices for $\GO(2l + 1, k)$}\label{elementarymato}
We index rows and columns by $0, 1,\ldots , l, -1, \ldots, -l$. 
The elementary matrices are defined as follows:
\begin{eqnarray*}
	x_{i,j}(t) =& I + t(e_{i,j} - e_{-j,-i} )& \text{for $i \neq j$},\\
	x_{i,-j}(t) =& I + t(e_{i,-j} - e_{j,-i})& \text{for $i < j$},\\
	x_{-i,j}(t)=& I + t(e_{-i,j} - e_{-j,i})& \text{for $i < j$},\\
	x_{i,0}(t) =& I + t(2e_{i,0} - e_{0,-i} ) - t^2 e_{i,-i}, \\
	x_{0,i}(t) =& I + t(-2e_{-i,0} + e_{0,i}) - t^2 e_{-i,i}, \\
	w_l =& I - e_{l,l} - e_{-l,-l} - e_{l,-l} - e_{-l,l}. 
\end{eqnarray*}
We write an element $g\in \mathrm{GO}(2l+1,k)$ as $g=\begin{pmatrix}\alpha&X&Y\\ E& A&B\\F&C&D\end{pmatrix}$, where $A,B,C$ and $D$ are $l\times l$ matrices, $X$ and $Y$ are $1\times l$ matrices, $E$ and $F$ are $l\times 1$ matrices, $\alpha\in k$. 
Let us note the effect of multiplying $g$ by elementary matrices in the following table.
\begin{table}[ht]
	\caption{The elementary operations for $\mathrm{GO}(2l+1,k)$} \label{Table 3} \vskip 0.2 cm
	\centering 
	\begin{tabular}{|c c ||c c|} 
		\hline 
		& Row operations & &Column operations  \\ [0.5ex]
		\hline 
		ER1 & $i^{\text{th}} \mapsto i^{\text{th}}+t j^{\text{th}}$ row and & EC1 & $j^{\text{th}} \mapsto j^{\text{th}}+t i^{\text{th}}$ column and \\
		& $-j^{\text{th}} \mapsto -j^{\text{th}}-t (-i)^{\text{th}}$  row &  &$-i^{\text{th}} \mapsto -i^{\text{th}}-t(-j)^{\text{th}}$  column\\
		\hline
		ER2 & $i^{\text{th}} \mapsto i^{\text{th}}+t (-j)^{\text{th}}$ row and & EC2 & $-i^{\text{th}} \mapsto -i^{\text{th}}-tj^{\text{th}}$ column and \\
		&$j^{\text{th}} \mapsto j^{\text{th}}-t (-i)^{\text{th}}$  row & &$-j^{\text{th}} \mapsto -j^{\text{th}}+ti^{\text{th}}$  column\\
		\hline
		ER3 &$-i^{\text{th}} \mapsto -i^{\text{th}}-tj^{\text{th}}$ row and & EC3 & $j^{\text{th}} \mapsto j^{\text{th}}+t(-i)^{\text{th}}$ column and \\
		&$-j^{\text{th}} \mapsto -j^{\text{th}}+ti^{\text{th}}$  row & & $i^{\text{th}} \mapsto i^{\text{th}}-t(-j)^{\text{th}}$  column\\
		\hline
		ER4 & $0^{\text{th}} \mapsto 0^{\text{th}}-t(-i)^{\text{th}}$ row and & EC4 & $0^{\text{th}} \mapsto 0^{\text{th}}+2ti^{\text{th}}$ column and \\
		&$i^{\text{th}} \mapsto i^{\text{th}}+2t0^{\text{th}}-t^{2}(-i)^{\text{th}}$  row & & $(-i)^{\text{th}} \mapsto (-i)^{\text{th}}-t0^{\text{th}}-t^{2}i^{\text{th}}$  column\\
		\hline
		ER4a &$0^{\text{th}} \mapsto 0^{\text{th}}+ti^{\text{th}}$ row and & EC4a & $0^{\text{th}} \mapsto 0^{\text{th}}-2t(-i)^{\text{th}}$ column and \\
		& $(-i)^{\text{th}} \mapsto (-i)^{\text{th}}-2t0^{\text{th}}-t^{2}i^{\text{th}}$  row 
		& &$i^{\text{th}} \mapsto i^{\text{th}}+t0^{\text{th}}-t^{2}(-i)^{\text{th}}$  column\\
		\hline
	\end{tabular}
\end{table}

\subsection{Gaussian elimination}
To explain the steps of our Gaussian elimination algorithm we need some lemmas.
In this subsection, we prove these lemmas.
\begin{lemma}\label{lemma1}
	Let $Y=\mathrm{diag}(1,\ldots,1,\lambda,\ldots,\lambda)$ be of size $l$
	with the number of $1$s equal to $m<l$. Let $X$ be a matrix of size $l$ such that 
	$YX$ is symmetric (resp. skew-symmetric) then $X$ is of the form 
	$ \begin{pmatrix}X_{11}&X_{12}\\X_{21}&X_{22}\end{pmatrix}$, 
	where $X_{11}$ is an $m\times m$ symmetric (resp. skew-symmetric), and 
	$X_{12}=\lambda \tr X_{21}$ (resp. $X_{12}=-\lambda \tr X_{21}$).
	Furthermore, if $\lambda \neq 0$ then $X_{22}$ is symmetric (resp. skew-symmetric).
\end{lemma}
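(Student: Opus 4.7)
The plan is to carry out a direct block computation and read off the claimed symmetry relations from entry-wise comparison. First I would partition $Y$ and $X$ conformally as
\[
Y=\begin{pmatrix} I_m & 0\\ 0 & \lambda I_{l-m}\end{pmatrix},\qquad X=\begin{pmatrix}X_{11}&X_{12}\\ X_{21}&X_{22}\end{pmatrix},
\]
where $X_{11}$ is $m\times m$, $X_{22}$ is $(l-m)\times(l-m)$, and $X_{12},X_{21}$ have the appropriate off-diagonal shapes. Then one computes
\[
YX=\begin{pmatrix}X_{11}&X_{12}\\ \lambda X_{21}&\lambda X_{22}\end{pmatrix}.
\]

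Next, I would impose the hypothesis $YX=\tr{(YX)}$ in the symmetric case (respectively $YX=-\tr{(YX)}$ in the skew-symmetric case). Comparing the four blocks of the equation gives the conditions $X_{11}=\pm\tr{X_{11}}$ on the top-left block, $X_{12}=\pm\lambda\tr{X_{21}}$ and the consistent twin relation $\lambda X_{21}=\pm\tr{X_{12}}$ on the off-diagonals, and $\lambda X_{22}=\pm\lambda\tr{X_{22}}$ on the bottom-right block. The first relation is exactly the claimed (skew-)symmetry of $X_{11}$, the second is the claimed equality between $X_{12}$ and $\pm\lambda\tr{X_{21}}$, and the fourth, once $\lambda\neq 0$ allows cancellation, yields the (skew-)symmetry of $X_{22}$.

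There is no real obstacle here; the entire content of the lemma is unpacking the definition of $YX$ symmetry in block form. The only mild subtlety worth flagging in the write-up is that the off-diagonal equations $X_{12}=\pm\lambda\tr{X_{21}}$ and $\lambda X_{21}=\pm\tr{X_{12}}$ are automatically compatible (substituting one into the other gives a tautology even without requiring $\lambda\neq 0$), so both are captured by the single stated relation $X_{12}=\pm\lambda\tr{X_{21}}$. The hypothesis $\lambda\neq 0$ is needed solely to divide out in the bottom-right block and conclude (skew-)symmetry of $X_{22}$, which matches the phrasing of the lemma.
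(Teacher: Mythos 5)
Your proposal is correct and follows exactly the paper's own argument: compute $YX$ in block form, impose $YX=\pm\tr{(YX)}$, and compare blocks, with $\lambda\neq 0$ used only to cancel in the bottom-right block. The one addition you make---noting that the twin off-diagonal relation is automatically equivalent to the stated one---is a harmless clarification, not a different route.
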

\begin{proof}
	First, observe that the matrix $YX=\begin{pmatrix}X_{11}&X_{12}\\\lambda X_{21}&\lambda X_{22}\end{pmatrix}$.
	Since the matrix $YX$ is symmetric (resp. skew-symmetric), then $X_{11}$ is symmetric (resp. skew-symmetric),
	and $X_{12}=\lambda \tr X_{21}$ (resp. $X_{12}=-\lambda \tr X_{21}$).
	Also if $\lambda \neq 0$ then $X_{22}$ is symmetric (resp. skew-symmetric).
\end{proof}
\begin{corollary}\label{lemma2}
	Let $g=\begin{pmatrix} A&B\\C&D \end{pmatrix}$ be either in $\GSp(2l,k)$ or $\GO(2l,k)$.
	\begin{enumerate}
		\item If $A$ is a diagonal matrix $\mathrm{diag}(1,\ldots,1,\lambda), \lambda \in k^{\times}$,  
		then the matrix $C$ is of the form $\begin{pmatrix}C_{11}&\pm \lambda \tr C_{21} 
		\\C_{21} & c_{ll}\end{pmatrix}$, 
		where $C_{11}$ is an $(l-1)\times(l-1)$ symmetric if $g \in \GSp(2l,k)$,  
		and $C_{11}$ is skew-symmetric with $c_{ll}=0$ if $g \in \GO(2l,k)$.
		\item If $A$ is a diagonal matrix $\mathrm{diag}(\underbrace{1,\ldots,1}_{m},\underbrace{0,\ldots,0}_{l-m})$, 
		then the matrix $C$ is of the form 
		$\begin{pmatrix} C_{11} &0\\C_{21}&C_{22}\end{pmatrix}$, where $C_{11}$ is an $m\times m$
		symmetric matrix if $g\in \GSp(2l,k)$, and is skew-symmetric if $g\in \GO(2l,k)$.
	\end{enumerate}
\end{corollary}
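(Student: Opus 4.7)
The plan is to reduce both parts of the corollary directly to Lemma~\ref{lemma1} applied with $Y = A$ and $X = C$. First I would write down the block relations that follow from the defining equation $\tr{g}\beta g = \mu(g)\beta$. A short block computation, using $\beta = \begin{pmatrix} 0 & I_l \\ -I_l & 0 \end{pmatrix}$ for $\GSp(2l,k)$ and $\beta = \begin{pmatrix} 0 & I_l \\ I_l & 0 \end{pmatrix}$ for $\GO(2l,k)$, yields $\tr{A}C - \tr{C}A = 0$ in the symplectic case and $\tr{A}C + \tr{C}A = 0$ in the orthogonal case. Since $A$ is diagonal we have $\tr{A} = A$, so these relations say exactly that $AC$ is symmetric (symplectic) or skew-symmetric (orthogonal). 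This is the hypothesis of Lemma~\ref{lemma1} with $Y = A$, $X = C$, so the conclusion of that lemma gives the block structure of $C$ asserted in the corollary.

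For part (1), I would take $Y = A = \diag(1,\ldots,1,\lambda)$ with $\lambda \in k^{\times}$, so $m = l-1$ in Lemma~\ref{lemma1}. The lemma immediately produces $C_{11}$ of size $(l-1)\times(l-1)$ symmetric (symplectic) or skew-symmetric (orthogonal), and $C_{12} = \lambda \tr{C_{21}}$ in the symplectic case, $C_{12} = -\lambda \tr{C_{21}}$ in the orthogonal case, which accounts for the $\pm$ in the statement. The bottom-right block $C_{22}$ is the $1 \times 1$ entry $c_{ll}$; the lemma says (since $\lambda \neq 0$) that it is symmetric or skew-symmetric respectively. Symmetry of a scalar is vacuous, so nothing is forced in the symplectic case, whereas in the orthogonal case skew-symmetry combined with $\mathrm{char}\,k \neq 2$ forces $c_{ll} = 0$, as claimed.

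For part (2), the same strategy applies verbatim with $Y = A = \diag(\underbrace{1,\ldots,1}_{m},\underbrace{0,\ldots,0}_{l-m})$, i.e. with ``$\lambda = 0$'' in the notation of Lemma~\ref{lemma1}. The lemma again gives $C_{11}$ of size $m \times m$ symmetric (symplectic) or skew-symmetric (orthogonal), while the identity $C_{12} = \pm\lambda\tr{C_{21}}$ collapses to $C_{12} = 0$. The block $C_{22}$ is unconstrained, exactly matching the asserted form $\begin{pmatrix} C_{11} & 0 \\ C_{21} & C_{22} \end{pmatrix}$.

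There is no genuine obstacle here: once the block relation $\tr{A}C \pm \tr{C}A = 0$ is written down, the corollary is simply a reading of Lemma~\ref{lemma1} with $Y = A$. The only points requiring a moment of care are aligning the sign in the block identity with the symmetric/skew-symmetric case of the lemma, and invoking $\mathrm{char}\,k \neq 2$ to pass from ``$c_{ll}$ is skew-symmetric'' to ``$c_{ll} = 0$'' in the orthogonal case of part~(1).
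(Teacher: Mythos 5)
Your proposal is correct and follows essentially the same route as the paper's proof: both extract from $\tr g\beta g=\mu(g)\beta$ that $AC$ is symmetric in the symplectic case and skew-symmetric in the orthogonal case (using $\tr A=A$ since $A$ is diagonal), and then read off the block form of $C$ from Lemma~\ref{lemma1} applied with $Y=A$ and $X=C$. Your additional details---the explicit block computation of the $(1,1)$ entry of $\tr g\beta g$, the sign bookkeeping for the $\pm$, and invoking $\mathrm{char}\,k\neq 2$ to pass from skew-symmetry of the scalar $c_{ll}$ to $c_{ll}=0$---merely spell out what the paper's terser proof leaves implicit.
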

\begin{proof}
	We use the condition that $g$ satisfies $\tr g \beta g=\mu(g)\beta$, and 
	$AC$ is symmetric (using $\tr A=A$, as $A$ is diagonal)  
	when $g\in \GSp(2l,k)$, and $AC$ is skew-symmetric when 
	$g\in \GO(2l,k)$. Then Lemma~\ref{lemma1} gives the required 
	form for $C$.
\end{proof}
\begin{corollary}\label{lemma3}
	Let $g=\begin{pmatrix}A&B\\0&\mu(g)A^{-1}\end{pmatrix} \in \GSp(2l,k)$ or $\GO(2l,k)$, where 
	$A=\mathrm{diag}(1,\ldots,1,\lambda)$, then the matrix $B$ is of the form 
	$\begin{pmatrix} B_{11}&\pm \lambda^{-1}\tr B_{21}\\B_{21}&b_{ll}\end{pmatrix}$, 
	where $B_{11}$ is a symmetric matrix of size $l-1$ if $g\in \GSp(2l,k)$, and 
	skew-symmetric with $b_{ll}=0$ if $g\in \GO(2l,k)$.
\end{corollary}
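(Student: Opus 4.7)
The plan is to mimic the proof of Corollary~\ref{lemma2}, but applied to the upper-right block $B$ of $g$ rather than the lower-left block $C$. First I would substitute the block-triangular element $g = \begin{pmatrix} A & B \\ 0 & \mu(g) A^{-1}\end{pmatrix}$ into the defining relation $\tr g \beta g = \mu(g)\beta$. In both the symplectic and the split orthogonal case (with $\beta$ in its standard form), a direct block computation shows that the $(1,1)$- and $(2,1)$-blocks of $\tr g \beta g$ vanish automatically, the $(1,2)$-block reduces to $\mu(g)\tr A \cdot A^{-1}$, which already equals $\mu(g)I$ because $A$ is diagonal, and the only nontrivial relation comes from the $(2,2)$-block, which reads
\[
\mu(g)\bigl(A^{-1} B \mp \tr{(A^{-1} B)}\bigr) = 0,
\]
with the upper sign ($-$) corresponding to $\GSp(2l,k)$ and the lower sign ($+$) to $\GO(2l,k)$. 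Since $\mu(g) \neq 0$, this says exactly that $A^{-1}B$ is symmetric in the symplectic case and skew-symmetric in the orthogonal case.

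Next I would invoke Lemma~\ref{lemma1}, taking $Y = A^{-1} = \mathrm{diag}(1,\ldots,1,\lambda^{-1})$, $m = l-1$, and $X = B$. The lemma then delivers the required block shape at once: $B_{11}$ is an $(l-1)\times(l-1)$ symmetric (resp. skew-symmetric) matrix, and $B_{12} = \lambda^{-1}\tr B_{21}$ (resp. $B_{12} = -\lambda^{-1}\tr B_{21}$), giving the desired $\pm\lambda^{-1}\tr B_{21}$ entry. Because $\lambda^{-1} \neq 0$, the lemma further forces the $1\times 1$ bottom-right block $B_{22} = b_{ll}$ to be symmetric (resp. skew-symmetric). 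In the orthogonal case this means $b_{ll} = -b_{ll}$, so $b_{ll} = 0$ since $\mathrm{char}\,k \neq 2$, while in the symplectic case $b_{ll}$ is unconstrained.

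The only real thing to watch is the bookkeeping of signs and of the factor $\lambda^{-1}$ arising because we use $Y = A^{-1}$ instead of $Y = A$. I do not anticipate a genuine obstacle: the argument is strictly parallel to Corollary~\ref{lemma2}, with the roles of $B$ and $C$ swapped, the relevant block relation coming from $\tr B D \mp \tr D B = 0$ rather than $\tr A C \mp \tr C A = 0$, and the diagonal matrix $A$ replaced by $A^{-1}$ when invoking Lemma~\ref{lemma1}.
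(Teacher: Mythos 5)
Your proposal is correct and follows essentially the same route as the paper's proof: use the condition $\tr g\beta g=\mu(g)\beta$ together with $\tr A=A$ to conclude that $A^{-1}B$ is symmetric for $\GSp(2l,k)$ and skew-symmetric for $\GO(2l,k)$, then apply Lemma~\ref{lemma1} (with $Y=A^{-1}$, $m=l-1$) to read off the block form, with $\mathrm{char}\,k\neq 2$ forcing $b_{ll}=0$ in the orthogonal case. One cosmetic slip only: the $(2,1)$-block of $\tr g\beta g$ does not vanish but equals $\mp\mu(g)I$, which matches the corresponding block of $\mu(g)\beta$ automatically, so the argument is unaffected.
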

\begin{proof}
	We use the condition that $g$ satisfies $\tr g \beta g=\mu(g)\beta$ and 
	$\tr A=A$ to get $A^{-1}B$ is symmetric if $g\in \GSp(2l,k)$, and skew-symmetric 
	if $g \in \GO(2l,k)$. Again Lemma~\ref{lemma1} gives the required form 
	for $B$.
\end{proof}
\begin{lemma}\label{lemma4}
	Let $g=\begin{pmatrix} A&B\\0&D\end{pmatrix}\in \GL(2l,k)$. Then,
	\begin{enumerate}
		\item  $g \in \GSp(2l,k)$ if and only if 
		$D=\mu(g)\tr A^{-1}$ and $\tr (A^{-1}B)=(A^{-1}B)$, and 
		\item  $g \in \GO(2l,k)$ if and only if 
		$D=\mu(g)\tr A^{-1}$ and $\tr (A^{-1}B)=-(A^{-1}B)$.
	\end{enumerate}
\end{lemma}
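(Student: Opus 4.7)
The proof is a direct block computation. The plan is to compute $\tr g\,\beta\, g$ in block form for both choices of $\beta$ and then compare it entry-by-entry with $\mu(g)\beta$.

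First I would fix the block decomposition
\[
g=\begin{pmatrix} A & B \\ 0 & D\end{pmatrix},\qquad \tr g=\begin{pmatrix} \tr A & 0 \\ \tr B & \tr D\end{pmatrix},
\]
and treat the two cases simultaneously by writing $\beta=\begin{pmatrix}0&I\\ \varepsilon I & 0\end{pmatrix}$ with $\varepsilon=-1$ in the symplectic case and $\varepsilon=+1$ in the orthogonal case. A single block multiplication gives
\[
\tr g\,\beta\, g=\begin{pmatrix} 0 & \tr A\cdot D \\ \varepsilon \tr D\cdot A & \tr B\cdot D+\varepsilon \tr D\cdot B\end{pmatrix}.
\]

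For the forward direction, setting this equal to $\mu(g)\beta$ immediately yields the $(1,2)$-block relation $\tr A\cdot D=\mu(g)I$, i.e.\ $D=\mu(g)\tr A^{-1}$; the $(2,1)$-block gives the same equation (after multiplying by $\varepsilon$), so no new information. The $(2,2)$-block forces $\tr B\cdot D=-\varepsilon\tr D\cdot B$. Substituting $D=\mu(g)\tr A^{-1}$ and dividing by $\mu(g)$, this becomes $\tr B\cdot \tr A^{-1}=-\varepsilon A^{-1}B$, i.e.\ $\tr(A^{-1}B)=-\varepsilon (A^{-1}B)$, which reads as symmetry for the symplectic case and skew-symmetry for the orthogonal case. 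The converse is the same calculation run backward: given $D=\mu(g)\tr A^{-1}$ and the appropriate symmetry of $A^{-1}B$, one plugs into the displayed block expression and checks that the off-diagonal blocks become $\mu(g)I$ and $\varepsilon \mu(g)I$ while the $(2,2)$-block vanishes.

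There is no real obstacle here; the only mild care needed is to observe that invertibility of $g$ forces both $A$ and $D$ to be invertible (since $g$ is block upper-triangular with $\det g=\det A\cdot \det D\neq 0$), so the expressions $A^{-1}$ and $\tr A^{-1}$ appearing in the statement are well-defined, and $\mu(g)$ is nonzero because it equals $\det(\tr A\cdot D)^{1/l}$ up to sign via the $(1,2)$-block equation. I would present the two parts of the lemma in parallel using the sign $\varepsilon$ so as not to duplicate the computation.
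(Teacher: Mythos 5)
Your proposal is correct and follows essentially the same route as the paper: the paper's proof is exactly this block expansion of $\tr g\beta g=\mu(g)\beta$ (stated there tersely as ``this implies\dots''), and your unified treatment via $\varepsilon=\mp1$ together with the observation that $\det g=\det A\cdot\det D\neq 0$ forces $A$ invertible merely makes explicit what the paper leaves to the reader. One cosmetic quibble: rather than writing $\mu(g)=\det(\tr A\cdot D)^{1/l}$ up to sign (roots need not exist in $k$), just note that $\mu(g)^l=\det A\cdot\det D\neq 0$, or simply that $\mu(g)\in k^{\times}$ by the definition of a similitude.
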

\begin{proof}
	\begin{enumerate}
		\item Let $g\in \GSp(2l,k)$ then $g$ satisfies $\tr g \beta g=\mu(g)\beta$.
		Then this implies $D=\mu(g)\tr A^{-1}$ and $\tr (A^{-1}B)=(A^{-1}B)$. 
		
		Conversely, if $g$ satisfies the given condition then clearly $g\in \GSp(2l,k)$.
		\item This follows by similar computation.
	\end{enumerate}
\end{proof}
\begin{lemma}\label{lemma5}
	Let $Y=\mathrm{diag}(1,\ldots,1,\lambda)$ be of size 
	$l$, where $\lambda \in k^{\times}$ and $X=(x_{ij})$ be 
	a matrix such that $YX$ is symmetric (resp. skew-symmetric).
	Then $X=(R_{1}+R_{2}+\ldots)Y$, where each $R_{m}$ is of the 
	form $t(e_{i,j}+e_{j,i})$ for some $i<j$ or of the form $te_{i,i}$
	for some $i$ (resp. each $R_{m}$ is of the form 
	$t(e_{i,j}-e_{j,i})$ for some $i<j$).
\end{lemma}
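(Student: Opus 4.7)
My plan is to reduce the claim to the elementary observation that every symmetric (resp. skew-symmetric) matrix decomposes into a sum of rank-one ``coordinate'' symmetric (resp. skew-symmetric) matrices, and then read off the factorization by multiplying by $Y$ on the right. Since $\lambda \in k^{\times}$, the matrix $Y$ is invertible, so I would set $X' := XY^{-1}$. This immediately gives the factorization $X = X'Y$ with the desired $Y$ on the right; it therefore suffices to exhibit $X'$ as a sum of the prescribed elementary matrices $R_m$.

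The crucial preparatory step is to transfer the symmetry hypothesis from $YX$ to $X'$. Because $Y$ is diagonal we have $Y^T = Y$, so writing $X = X'Y$ converts the hypothesis $YX = \pm (YX)^T$ into $YX'Y = \pm Y(X')^T Y$. Cancelling the invertible $Y$ on both sides yields $X' = \pm (X')^T$, i.e.\ $X'$ is symmetric in the $+$ case and skew-symmetric in the $-$ case.

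Once this is in hand, the decomposition of $X' = (x'_{ij})$ is routine bookkeeping. In the symmetric case I would write
\[
X' \;=\; \sum_{i=1}^{l} x'_{ii}\,e_{i,i} \;+\; \sum_{1 \le i < j \le l} x'_{ij}\bigl(e_{i,j} + e_{j,i}\bigr),
\]
where the off-diagonal sum captures the entries below the diagonal via $x'_{ji} = x'_{ij}$. In the skew-symmetric case the diagonal entries vanish and
\[
X' \;=\; \sum_{1 \le i < j \le l} x'_{ij}\bigl(e_{i,j} - e_{j,i}\bigr).
\]
Each summand is precisely of the form prescribed for $R_m$, so multiplying on the right by $Y$ gives the claimed expression $X = (R_1 + R_2 + \cdots)Y$.

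I do not expect any genuine obstacle: the one non-trivial point is the little algebraic manoeuvre that transfers (skew-)symmetry from $YX$ to $XY^{-1}$, which works cleanly only because $Y$ is both diagonal and invertible. Everything else is a standard basis expansion of a (skew-)symmetric matrix and requires no further input.
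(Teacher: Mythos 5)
Your proof is correct, and it takes a mildly but genuinely different route from the paper's. The paper never inverts $Y$: it reads off the block structure that the symmetry of $YX$ forces on $X$ (exactly as in Lemma~\ref{lemma1} with $m=l-1$), namely $X=\begin{pmatrix}X_{11}&X_{12}\\X_{21}&x_{ll}\end{pmatrix}$ with $X_{11}$ symmetric (resp.\ skew-symmetric) and $X_{12}=\pm\lambda\tr X_{21}$, and then asserts without further bookkeeping that such an $X$ is ``clearly'' a sum of matrices $R_mY$. You instead use the hypothesis $\lambda\in k^{\times}$ to set $X'=XY^{-1}$, transfer the (skew-)symmetry from $YX$ to $X'$ by cancelling the invertible diagonal $Y$ in $YX'Y=\pm\, Y\tr X' Y$, and then expand $X'$ in the standard basis of (skew-)symmetric matrices. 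Your version is cleaner and in fact supplies the step the paper leaves implicit: once $X'$ itself is (skew-)symmetric, the decomposition into the prescribed $R_m$ is the standard basis expansion, with no case distinction for the last row and column. What the paper's block-form phrasing buys is uniformity with the neighbouring results (Corollaries~\ref{lemma2} and~\ref{lemma3} run the same block analysis, including situations where the diagonal matrix has zero entries and is not invertible); your cancellation argument is genuinely tied to $\lambda\neq 0$, but that is part of the hypothesis of this lemma, so nothing is lost. One point worth stating explicitly in your write-up: in the skew-symmetric case, the vanishing of the diagonal entries of $X'$ uses $\mathrm{char}\,k\neq 2$, which is the paper's standing assumption.
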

\begin{proof}
	Since the matrix $YX$ is symmetric (resp. skew-symmetric), 
	then the matrix $X$ is of the form 
	$\begin{pmatrix}X_{11}&X_{12}\\X_{21}&x_{ll}\end{pmatrix}$, 
	where $X_{11}$ is symmetric (resp. skew-symmetric), 
	$X_{12}=\lambda \tr X_{21}$ (resp. $X_{12}=-\lambda \tr X_{21}$) 
	and $X_{21}$ is a row of size $l-1$. 
	Clearly, $X$ is a sum of the matrices of the form 
	$R_m Y$.    
\end{proof}
\begin{lemma}\label{lemma6}
	For $1\leq i\leq l$, 
	\begin{enumerate}
		\item The element $w_{i,-i}=I+e_{i,-i}-e_{-i,i}-e_{i,i}-e_{-i,-i} \in \GSp(2l,k)$ 
		is a product of elementary matrices.
		\item The element $w_{i,-i}=I-e_{i,-i}-e_{-i,i}-e_{i,i}-e_{-i,-i} \in \GO(2l,k)$ 
		is a product of elementary matrices.
		\item The element $w_{i,-i}=I-2e_{0,0}-e_{i,-i}-e_{-i,i}-e_{i,i}-e_{-i,-i} \in \GO(2l+1,k)$ 
		is a product of elementary matrices.     
	\end{enumerate}
\end{lemma}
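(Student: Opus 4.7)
The unifying principle I would use is the standard Chevalley-type identity: for any root $\alpha$ in a Chevalley group, the element $w_\alpha(1) := x_\alpha(1)\, x_{-\alpha}(-1)\, x_\alpha(1)$ lies in the group and acts as a (lifted) reflection. All three parts should follow from this identity, applied to a suitable root and, where needed, combined with a conjugation by a permutation-type Weyl element that realigns the index $i$ with the index $l$ where a base case is already available.

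For part $(1)$, $\mathrm{GSp}(2l,k)$: since $x_{i,-i}(t) = I+te_{i,-i}$ and $x_{-i,i}(t) = I+te_{-i,i}$ are themselves in the list of elementary matrices, I would set $\alpha = 2e_i$ and directly expand the triple product $x_{i,-i}(1)\, x_{-i,i}(-1)\, x_{i,-i}(1)$, using the matrix unit identities $e_{i,-i}e_{-i,i}=e_{i,i}$, $e_{-i,i}e_{i,-i}=e_{-i,-i}$, $e_{i,-i}^2=0$, $e_{-i,i}^2=0$. A short calculation collapses the result to $I+e_{i,-i}-e_{-i,i}-e_{i,i}-e_{-i,-i}$, which is the claimed formula. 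For part $(2)$, $\mathrm{GO}(2l,k)$: the base case $i=l$ is immediate because $w_l$ is itself in the list of elementary matrices and equals $w_{l,-l}$. For $i<l$, no single elementary matrix hits position $(i,-i)$, so I would instead construct the permutation-type Weyl element $\pi_{i,l} := x_{i,l}(1)\, x_{l,i}(-1)\, x_{i,l}(1)$ (a product of three elementary matrices), verify that it implements the signed swap $e_i\leftrightarrow e_l$, $e_{-i}\leftrightarrow e_{-l}$ on the basis, and then show that $\pi_{i,l}\, w_l\, \pi_{i,l}^{-1} = w_{i,-i}$. Since $\pi_{i,l}$ and $w_l$ are already products of elementary matrices, so is $w_{i,-i}$.

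For part $(3)$, $\mathrm{GO}(2l+1,k)$: the same conjugation trick by $\pi_{i,l}$ reduces the problem to the base case $i=l$, so the core task is to realize $w_{l,-l} = I - 2e_{0,0} - e_{l,-l} - e_{-l,l} - e_{l,l} - e_{-l,-l}$ — which also flips $e_0\mapsto -e_0$ — as a product of elementary matrices. Here I would again invoke the Chevalley identity, this time for the short root $e_l$ of type $B_l$, setting $w_{l,-l} \overset{?}{=} x_{l,0}(1)\, x_{0,l}(-1)\, x_{l,0}(1)$, and verify the identity by direct computation. The main obstacle, and the reason this case is more delicate than the first two, is that $x_{l,0}(t)$ and $x_{0,l}(t)$ each contain a quadratic correction term ($-t^2e_{l,-l}$ and $-t^2e_{-l,l}$) forced by orthogonality; expanding the triple product therefore produces numerous cross terms, and the computation requires carefully tracking the $t$ and $t^2$ contributions and the matrix unit identities (for instance $e_{l,0}e_{0,-l}=e_{l,-l}$, $e_{l,0}e_{0,l}=e_{l,l}$, $e_{0,-l}e_{-l,0}=e_{0,0}$) to check that the extraneous terms cancel and the $-2e_{0,0}$ term emerges with the correct coefficient. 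Once this base identity is verified, conjugating by $\pi_{i,l}$ completes the general case.
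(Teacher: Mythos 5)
Your proposal is correct --- all three identities you write down do hold (in particular your base-case identity $x_{l,0}(1)\,x_{0,l}(-1)\,x_{l,0}(1)=w_{l,-l}$ checks out on the span of $e_0,e_l,e_{-l}$: the quadratic correction terms cancel exactly as you anticipate, and the $-2e_{0,0}$ term emerges correctly) --- but in parts (2) and (3) you take a genuinely different route from the paper. Part (1) is identical: the paper also writes $w_{i,-i}=x_{i,-i}(1)x_{-i,i}(-1)x_{i,-i}(1)$. For part (2), the paper argues by a descending induction: it first builds the auxiliary elements $w_{i,j}=x_{i,j}(1)x_{j,i}(-1)x_{i,j}(1)$ and $w_{i,-j}=x_{i,-j}(1)x_{-i,j}(1)x_{i,-j}(1)$, then uses the product identity $w_l\,w_{l,l-1}\,w_{l,-(l-1)}=w_{(l-1),-(l-1)}$ to step down one index at a time from $l$ to $i$; you instead conjugate the single base element $w_l$ by the Weyl element $\pi_{i,l}=x_{i,l}(1)x_{l,i}(-1)x_{i,l}(1)$, jumping directly to index $i$. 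Both are valid: your conjugation (which indeed sends $e_i\mapsto -e_{-i}$, $e_{-i}\mapsto -e_i$ and fixes all other basis vectors, since $\pi_{i,l}$ implements the signed swap you describe and its inverse is again a product of elementary matrices) expresses every $w_{i,-i}$ as a product of $7$ elementary matrices independently of $i$, whereas the paper's induction produces a word whose length grows linearly in $l-i$ --- a small quantitative gain for you, at the cost of one extra verification; neither affects the $\mathcal{O}(l^3)$ bound in Theorem A. For part (3), your detour through the base case $i=l$ is sound but unnecessary: the generators $x_{i,0}(t)$ and $x_{0,i}(t)$ exist for every $1\leq i\leq l$, not just $i=l$, and the paper simply writes $w_{i,-i}=x_{0,i}(-1)x_{i,0}(1)x_{0,i}(-1)$ directly for each $i$; your base-case identity holds verbatim with $l$ replaced by $i$, so the conjugation by $\pi_{i,l}$ can be dropped there entirely.
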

\begin{proof}
	\begin{enumerate}
		\item We have $w_{i,-i}=x_{i,-i}(1)x_{-i,i}(-1)x_{i,-i}(1)$.
		\item We produce these elements inductively. 
		First we get $w_{i,-j}=(I+e_{i,-j}-e_{j,-i})(I+e_{-i,j}-e_{-j,i})(I+e_{i,-j}-e_{j,-i})
		=x_{i,-j}(1)x_{-i,j}(1)x_{i,-j}(1)$, and 
		$w_{i,j}=(I+e_{i,j}-e_{-j,-i})(I-e_{j,i}+e_{-i,-j})(I+e_{i,j}-e_{-j,-i})
		=x_{i,j}(1)x_{j,i}(-1)x_{i,j}(1)$. 
		Set $w_{l}:=w_{l,-l}=I-e_{l,l}-e_{-l,-l}-e_{l,-l}-e_{-l,l}$. 
		Then compute $w_{l}w_{l,l-1}w_{l,-(l-1)}=w_{(l-1),-(l-1)}$.
		So inductively we get $w_{i,-i}$ is a product of elementary matrices.
		\item We have $w_{i,-i}=x_{0,i}(-1)x_{i,0}(1)x_{0,i}(-1)$.
	\end{enumerate}
\end{proof}
\begin{lemma}\label{lemma7}
	The element $\mathrm{diag}(1,\ldots,1,\lambda,1,\ldots,1,\lambda^{-1})\in \GSp(2l,k)$ 
	is a product of elementary matrices.
\end{lemma}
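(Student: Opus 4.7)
The plan is to realise the diagonal matrix as the standard Chevalley semisimple element $h_{l,-l}(\lambda)$ associated to the long root $\alpha = 2\epsilon_l$ of $C_l$, and to show this element can be written as a product of six elementary matrices of the forms $x_{l,-l}(\cdot)$ and $x_{-l,l}(\cdot)$ given in Section~\ref{elementarymatsp}.

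First I would introduce, for $\lambda\in k^\times$, the auxiliary element
\[
w_{l,-l}(\lambda)\; :=\; x_{l,-l}(\lambda)\, x_{-l,l}(-\lambda^{-1})\, x_{l,-l}(\lambda),
\]
which is manifestly a product of three elementary matrices of $\GSp(2l,k)$. By direct expansion, using $e_{l,-l}^{2}=0$, $e_{-l,l}^{2}=0$, $e_{l,-l}e_{-l,l}=e_{l,l}$ and $e_{-l,l}e_{l,-l}=e_{-l,-l}$, I would verify the closed form
\[
w_{l,-l}(\lambda)\; =\; I+\lambda e_{l,-l}-\lambda^{-1}e_{-l,l}-e_{l,l}-e_{-l,-l}.
\]
Specialising to $\lambda=1$ recovers the Weyl element $w_{l,-l}$ already written as a product of elementary matrices in Lemma~\ref{lemma6}(1), and $w_{l,-l}(-1)=w_{l,-l}(1)^{-1}$ is likewise such a product.

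Next I would form the product $w_{l,-l}(\lambda)\,w_{l,-l}(-1)$ and compute it entrywise from the formulas above. A short calculation shows that all off-diagonal contributions in the $\{l,-l\}$-block cancel, leaving only the diagonal correction
\[
w_{l,-l}(\lambda)\,w_{l,-l}(-1)\; =\; I+(\lambda-1)e_{l,l}+(\lambda^{-1}-1)e_{-l,-l}\; =\; \mathrm{diag}(1,\ldots,1,\lambda,1,\ldots,1,\lambda^{-1}),
\]
which is the required element. Since this equals a product of six elementary matrices of $\GSp(2l,k)$ by construction, the lemma follows.

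The only mildly delicate step is the entrywise expansion in the second paragraph, because both cross-terms $x_{l,-l}(\lambda)\,x_{-l,l}(-\lambda^{-1})$ and their iterates contribute diagonal pieces that must be tracked carefully; however, every other rank-one root pair in $\GSp(2l,k)$ commutes with $x_{l,-l}(\cdot)$ and $x_{-l,l}(\cdot)$, so the computation remains confined to the $2\times 2$ subblock indexed by $\{l,-l\}$ and reduces to a finite symbolic manipulation with no indexing subtleties.
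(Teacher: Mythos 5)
Your proof is correct and is essentially the paper's own argument: the paper defines $w_{l,-l}(t)=x_{l,-l}(t)\,x_{-l,l}(-t^{-1})\,x_{l,-l}(t)$, expands it to the same closed form $I-e_{l,l}-e_{-l,-l}+te_{l,-l}-t^{-1}e_{-l,l}$, and obtains the required diagonal element as $h_l(\lambda)=w_{l,-l}(\lambda)\,w_{l,-l}(-1)$, exactly as you do. (One harmless inaccuracy in your closing remark: it is not true that every other root pair commutes with $x_{l,-l}(\cdot)$ and $x_{-l,l}(\cdot)$ --- for instance $x_{i,l}(t)$ does not --- but this is never used, since the computation stays in the $\{l,-l\}$ block simply because all six factors are the identity outside that $2\times 2$ block.)
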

\begin{proof}
	First we compute 
	\begin{align*}
		w_{l,-l}(t)&=(I+te_{l,-l})(I-t^{-1}e_{-l,l})(I+te_{l,-l}) \\
		&=I-e_{l,l}-e_{-l,-l}+te_{l,-l}-t^{-1}e_{-l,l} \\
		&=x_{l,-l}(t)x_{-l,l}(-t^{-1})x_{l,-l}(t).
	\end{align*}                             
	Then compute 
	\begin{align*}
		h_{l}(\lambda)&=w_{l,-l}(\lambda)w_{l,-l}(-1) \\
		&=I-e_{l,l}-e_{-l,-l}+\lambda e_{l,l}+\lambda^{-1}e_{-l,-l}, 
	\end{align*}                      
	which is the required element.
\end{proof}
\begin{lemma}\label{lemma8}
	Let $g=\begin{pmatrix} \alpha &X&Y\\E&A&B\\F&C&D\end{pmatrix} \in \GO(2l+1,k)$. Then, 
	\begin{enumerate}
		\item if $A=\mathrm{diag}(1,\ldots,1,\lambda)$ and $X=0$, then 
		$C$ is of the form $\begin{pmatrix}C_{11}&-\lambda \tr C_{21}\\C_{21}&0 \end{pmatrix}$ 
		with $C_{11}$ skew-symmetric.
		\item If $A=\mathrm{diag}(\underbrace{1,\ldots,1}_{m},\underbrace{0,\ldots,0}_{l-m})$, 
		and $X$ with its first $m$ entries $0$, then 
		$C$ is of the form $\begin{pmatrix}C_{11}&0\\C_{21}&C_{22} \end{pmatrix}$ 
		with $C_{11}$ skew-symmetric.
	\end{enumerate}
\end{lemma}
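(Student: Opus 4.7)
The strategy is to read off the block constraints on $C$ directly from the defining relation $\tr g\,\beta\,g = \mu(g)\,\beta$, where $\beta$ is the $(2l{+}1)$-dimensional split form from Equation~\eqref{beta2}. Writing $\beta$ in the $3\times 3$ block decomposition adapted to $g$ and computing $\beta g$ gives
\begin{equation*}
\beta g \;=\; \begin{pmatrix} 2\alpha & 2X & 2Y \\ F & C & D \\ E & A & B \end{pmatrix},
\end{equation*}
so the $(2,2)$ block of $\tr g\,\beta\,g = \mu(g)\,\beta$, which must vanish since $\beta_{22}=0$, reads
\begin{equation*}
2\,\tr X\, X \;+\; \tr A\, C \;+\; \tr C\, A \;=\; 0.
\end{equation*}
Since $A$ is diagonal in both parts, $\tr A = A$, and this rearranges to $AC + \tr(AC) + 2\,\tr X\, X = 0$. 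The rest is block bookkeeping.

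For part (1), the hypothesis $X=0$ collapses the equation to $AC + \tr(AC) = 0$, i.e., $AC$ is skew-symmetric. With $A=\diag(1,\ldots,1,\lambda)$, I then invoke Lemma~\ref{lemma1} in the skew-symmetric case (taking $Y=A$, $m=l-1$), which delivers the claimed block form of $C$: a $(l-1)\times(l-1)$ skew-symmetric $C_{11}$, the relation $C_{12}=-\lambda\,\tr C_{21}$, and a $1\times 1$ skew-symmetric bottom-right entry which is $0$ since $\mathrm{char}\neq 2$.

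For part (2), I block-decompose $C=\begin{pmatrix}C_{11}&C_{12}\\C_{21}&C_{22}\end{pmatrix}$ and $X=(X_{1}\ X_{2})$ conformally with $A=\diag(I_{m},0_{l-m})$, so that $C_{11}$ is $m\times m$. Using the hypothesis $X_{1}=0$, a direct computation yields
\begin{equation*}
AC = \begin{pmatrix}C_{11}&C_{12}\\0&0\end{pmatrix}, \quad \tr C\, A = \begin{pmatrix}\tr C_{11}&0\\\tr C_{12}&0\end{pmatrix}, \quad 2\,\tr X\, X = \begin{pmatrix}0&0\\0&2\,\tr X_{2}\, X_{2}\end{pmatrix}.
\end{equation*}
Setting the block-wise sum to zero, the top-left block gives $C_{11}+\tr C_{11}=0$ and the top-right gives $C_{12}=0$, which is exactly the claim. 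As a side remark, the bottom-right block actually forces $\tr X_{2}\, X_{2}=0$ and hence $X_{2}=0$ in characteristic $\neq 2$; so the hypothesis tacitly makes $X$ vanish entirely, but the lemma records only the consequence on $C$ needed later in the algorithm.

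There is no real conceptual obstacle; the one care point is correctly pinning down that the middle block of $\tr g\,\beta\,g$ carries the extra $2\,\tr X\, X$ contribution coming from the $e_{0}$ direction — this is precisely why a hypothesis on $X$ has to appear in this odd-dimensional analogue of Corollary~\ref{lemma2}.
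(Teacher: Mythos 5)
Your proposal is correct and takes essentially the same approach as the paper: both extract the relation $2\tr XX+\tr AC+\tr CA=0$ from the middle block of $\tr g\beta g=\mu(g)\beta$, conclude in case (1) that $AC$ is skew-symmetric so that Lemma~\ref{lemma1} gives the block form of $C$ (with $c_{ll}=0$ since $\mathrm{char}\,k\neq 2$), and in case (2) read off $C_{12}=0$ and $C_{11}+\tr C_{11}=0$ by direct block bookkeeping. Your side remark that the bottom-right block forces $\tr X_2X_2=0$ and hence $X_2=0$ in characteristic $\neq 2$ is also correct, though not needed for the stated conclusion and not recorded in the paper.
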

\begin{proof}
	We use the equation $\tr g\beta g=\mu(g)\beta$, and get 
	$2\tr XX+\tr AC+\tr CA=0$. In the first case, $AC$ is skew-symmetric 
	(using $X=0$ and $\tr A=A$). Then Lemma~\ref{lemma1} and Corollary~\ref{lemma2} 
	give the required form for $C$. In the second case, we note that $\tr XX$ has 
	top-left and top-right blocks $0$, and get the required form for $C$.
\end{proof}
\begin{lemma}\label{lemma9}
	Let $g=\begin{pmatrix}\alpha &X&Y\\E&A&B\\F&0&D \end{pmatrix} \in \GO(2l+1,k)$, then
	$X=0$, and $D=\mu(g)\tr A^{-1}$.
\end{lemma}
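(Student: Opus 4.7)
The plan is to extract the stated facts directly from the defining equation $\tr g\,\beta\, g = \mu(g)\beta$ with $\beta$ as in Equation~\eqref{beta2} for $n=2l+1$, by reading off the appropriate blocks. Writing
\[
\beta g =\begin{pmatrix}2\alpha & 2X & 2Y\\ F & 0 & D\\ E & A & B\end{pmatrix}
\qquad\text{and}\qquad
\tr g=\begin{pmatrix}\alpha & \tr E & \tr F\\ \tr X & \tr A & 0\\ \tr Y & \tr B & \tr D\end{pmatrix},
\]
the product $\tr g\,\beta\, g$ becomes a $3\times 3$ block matrix whose blocks we can compare against the corresponding blocks of $\mu(g)\beta$.

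First, I would compute the $(2,2)$ block. The zero appearing in the $(3,2)$ block of $g$ makes most of the cross terms vanish, and one is left with $2\,\tr X\, X$, which must equal the $(2,2)$ block of $\mu(g)\beta$, namely $0$. Since $\mathrm{char}(k)\neq 2$, this yields $\tr X\, X=0$. The matrix $\tr X\, X$ is the outer product of the row $X$ with itself, whose $(i,j)$ entry is $X_i X_j$; in particular its diagonal entries $X_i^2$ vanish, so $X_i=0$ for each $i$, giving $X=0$.

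Next, I would compute the $(2,3)$ block of $\tr g\,\beta\, g$. Again exploiting the zero in $g$, this block reduces to $2\,\tr X\, Y + \tr A\, D$, and it must equal the $(2,3)$ block of $\mu(g)\beta$, which is $\mu(g) I_l$. Using $X=0$ from the previous step, this gives $\tr A\, D=\mu(g) I_l$. Since $\mu(g)\in k^\times$, the matrix $\tr A$ (and hence $A$) is invertible, and we obtain $D=\mu(g)\,\tr A^{-1}$, as claimed.

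I do not anticipate a real obstacle here; the only minor subtlety is the outer-product argument in the $X=0$ step, where one must note that the rank-one matrix $\tr X\, X$ has zero diagonal precisely when $X$ itself vanishes, which is why the hypothesis $\mathrm{char}(k)\neq 2$ is essential. The invertibility of $A$ in the last step is automatic once we know $\tr A\, D=\mu(g) I_l$ with $\mu(g)\neq 0$, so no extra input on $g$ is needed.
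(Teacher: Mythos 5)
Your proof is correct and follows essentially the same route as the paper's: both read off the blocks of $\tr g\,\beta\,g=\mu(g)\beta$ to obtain $2\,\tr X X=0$ and $2\,\tr X Y+\tr A D=\mu(g)I_l$, concluding $X=0$ and $D=\mu(g)\tr A^{-1}$, with your write-up merely making explicit the outer-product step and the automatic invertibility of $A$ that the paper leaves implicit. One small correction to your closing remark: $\mathrm{char}(k)\neq 2$ is needed only to pass from $2\,\tr X X=0$ to $\tr X X=0$, while the diagonal-entry argument $X_i^2=0\Rightarrow X_i=0$ uses just that $k$ is a field.
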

\begin{proof}
	We compute $\tr g\beta g=\mu(g)\beta$, and get $2\tr XX=0$ and 
	$2\tr XY+\tr AD=\mu(g)I$. Hence $X=0$, and $D=\mu(g)\tr A^{-1}$.
\end{proof}
\begin{lemma}\label{lemma10}
	Let $g=\begin{pmatrix}\alpha &0&Y\\0&A&B\\F&0&D \end{pmatrix}$, 
	with $A$ an invertible diagonal matrix. Then 
	$g\in \GO(2l+1,k)$ if and only if $\alpha^2=\mu(g), F=0=Y, D=\mu(g)A^{-1}$ and 
	$\tr DB+\tr BD=0$, where $\mu(g) \in k^{\times}$ is the multiplier of $g$.
\end{lemma}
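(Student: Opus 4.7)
The plan is to compute $\tr g \beta g$ directly as a $3 \times 3$ block matrix and compare it entry-by-entry with $\mu(g)\beta$, using the explicit form
$\beta=\begin{pmatrix}2&0&0\\0&0&I_l\\0&I_l&0\end{pmatrix}$ of the bilinear form for $\GO(2l+1,k)$. Since $g$ already has three zero blocks by hypothesis (the $(0,1..l)$, $(0,-1..-l)$-transpose, and $(-1..-l,1..l)$ positions coming from $X=0$, $E=0$, $C=0$), the resulting $3\times 3$ block matrix is relatively sparse, so the computation is clean rather than heavy.

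First I would write out $\tr g\beta g$ block by block. Comparing the $(0,0)$ scalar block to $2\mu(g)$ immediately yields $\alpha^2=\mu(g)$. The $(0,1..l)$ and $(1..l,0)$ off-diagonal blocks give $\tr F A=0$ and $\tr A F=0$; since $A$ is an invertible diagonal matrix (so $\tr A=A$ is invertible), these force $F=0$. The $(1..l,-1..-l)$ and $(-1..-l,1..l)$ blocks give $\tr A D=\mu(g)I$ and $\tr D A=\mu(g)I$; using $\tr A=A$ and its invertibility, both reduce to $D=\mu(g)A^{-1}$.

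Next, using the already-deduced $F=0$, the $(0,-1..-l)$ block of $\tr g\beta g$ collapses to $2\alpha Y$, which must be zero. Since $\mu(g)\in k^\times$ forces $\alpha\neq 0$ (via $\alpha^2=\mu(g)$), this gives $Y=0$. Finally, with $Y=0$ in hand, the $(-1..-l,-1..-l)$ block of $\tr g\beta g$ reduces to $\tr B D + \tr D B$, which must equal the zero block of $\mu(g)\beta$; this produces the last condition $\tr D B + \tr B D=0$. The reverse implication is immediate: plugging the four conditions back into the block computation, each of the nine blocks of $\tr g\beta g$ matches the corresponding block of $\mu(g)\beta$.

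There is no real obstacle here; the only care required is to extract the conditions in the correct order (first $F=0$ from the off-diagonal blocks of row/column $0$, then $Y=0$ using $\alpha\neq 0$, and only then the symmetry condition on $BD$ from the lower-right block), so that each deduction is justified by the ones already made. This lemma is essentially the $\GO(2l+1,k)$-analog of Lemma~\ref{lemma4} in the Levi-block-upper-triangular setting and will be invoked later to certify that the output of the Gaussian elimination algorithm in the odd orthogonal case has the claimed normal form.
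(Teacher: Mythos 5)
Your proposal is correct and takes essentially the same approach as the paper: the paper's proof likewise consists of expanding the condition $\tr g\beta g=\mu(g)\beta$ in $3\times 3$ block form and reading off $\alpha^2=\mu(g)$, $F=0=Y$, $D=\mu(g)A^{-1}$ and $\tr DB+\tr BD=0$, with the converse noted as immediate. Your write-up merely makes explicit the block-by-block bookkeeping and the deduction order (first $F=0$ from invertibility of $A$, then $Y=0$ from $\alpha\neq 0$) that the paper's one-line proof leaves implicit.
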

\begin{proof}
	Let $g\in \GO(2l+1,k)$ then we have $\tr g\beta g=\mu(g)\beta$. So we get 
	$\alpha^2=\mu(g), F=0=Y, D=\mu(g)A^{-1}$ and $\tr DB+\tr BD=0$.
	
	Conversely, if $g$ satisfies the given condition, then $g \in \GO(2l+1,k)$.
\end{proof}
We are now in a position to describe our algorithms.
\subsubsection{Gaussian elimination for $\GO(2l,k)$ and  $\GSp(2l,k)$ }\label{gausseven}
The algorithm is as follows:

\vspace{2.5mm}
\noindent
Step $1$: 
\vspace{-8mm}
	\begin{enumerate}
	\item[] \textit{Input}: A matrix $g=\begin{pmatrix}A&B\\C&D \end{pmatrix} \in \GSp(2l,k)$ or $\GO(2l,k)$.\\
	\item[] \textit{Output}: The matrix $g_1=\begin{pmatrix}A_1&B_1\\C_1&D_1 \end{pmatrix}$ 
	is one of the following kind:\\
	\begin{enumerate}
		\item The matrix $A_1$ is a diagonal matrix 
		$\mathrm{diag}(1,\ldots,1,\lambda)$ with $\lambda \neq 0$, 
		and $C_1=\begin{pmatrix}C_{11}&C_{12}\\C_{21}&c_{ll}\end{pmatrix}$,  
		where $C_{11}$ is symmetric, when $g\in \GSp(2l,k)$, and 
		skew-symmetric, when $g\in \GO(2l,k)$, and is of size $l-1$. 
		Furthermore, $C_{12}=\lambda \tr C_{21}$, when $g \in \GSp(2l,k)$, and 
		$C_{12}=-\lambda \tr C_{21}, c_{ll}=0$, when $g \in \GO(2l,k)$.
		\item The matrix $A_1$ is a diagonal matrix $\mathrm{diag}
		(\underbrace{1,\ldots,1}_{m},\underbrace{0,\ldots,0}_{l-m})$, and 
		$C_1=\begin{pmatrix} C_{11}&0\\C_{21}&C_{22}\end{pmatrix}$, where 
		$C_{11}$ is an $m\times m$ symmetric, when $g\in \GSp(2l,k)$ and 
		skew-symmetric, when $g\in \GO(2l,k)$.
	\end{enumerate}
	\item[] \textit{Justification}: Observe the effect of ER$1$ and EC$1$ on the block $A$. 
	This amounts to the classical Gaussian elimination on a $l\times l$ matrix $A$. 
	Thus we can reduce $A$ to a diagonal matrix, and Corollary~\ref{lemma2} makes sure 
	that $C$ has the required form. 
\end{enumerate} 
\vspace{1mm}    
Step $2$: 
\vspace{-8mm}  
\begin{enumerate}
	\item[] \textit{Input}: matrix $g_1=\begin{pmatrix}A_1&B_1\\C_1&D_1\end{pmatrix}$.
	\item[] \textit{Output}: matrix $g_2=\begin{pmatrix}A_2&B_2\\0&\mu(g) \tr A_{2}^{-1}\end{pmatrix}
	;\; A_2=\mathrm{diag}(1,\ldots,1,\lambda)$.
	\item[] \textit{Justification}: Observe the effect of ER$3$. It changes $C_1$ by 
	$RA_1+C_1$. Using Lemma~\ref{lemma5} we can make the matrix $C_1$ the zero matrix 
	in the first case, and $C_{11}$ the zero matrix in the second case. Furthermore, in the second 
	case, we make use of Lemma~\ref{lemma6} to interchange the rows, so that we get a zero matrix 
	in place of $C_1$. If required, use ER$1$ and EC$1$ to make $A_1$ a diagonal matrix.
	Lemma~\ref{lemma4} ensures that $D_1$ becomes $\mu(g) \tr A_2^{-1}$.
\end{enumerate}
Step $3$: 
\vspace{-8mm}
\begin{enumerate}
	\item[] \textit{Input}: matrix $g_2=\begin{pmatrix}A_2&B_2\\0&\mu(g)\tr A_2^{-1}\end{pmatrix}
	;\; A_2=\mathrm{diag}(1,\ldots,1,\lambda)$.
	\item[] \textit{Output}: matrix $g_3=\mathrm{diag}(1,\ldots,1,\lambda,\mu(g),\ldots,\mu(g),\mu(g) \lambda^{-1})$.
	\item[] \textit{Justification}: Using Corollary~\ref{lemma3} we see that the matrix 
	$B_2$ has a certain form. We can use ER$2$ to make the matrix $B_2$ a zero matrix 
	because of Lemma~\ref{lemma5}.
\end{enumerate}
The algorithm terminates here for $\GO(2l,k)$. However, for $\GSp(2l,k)$ there is one more step.\\

\noindent
Step $4$:
\vspace{-7.0mm}
\begin{enumerate}
	\item[] \textit{Input}: matrix $g_3=\mathrm{diag}(1,\ldots,1,\lambda,\mu(g),\ldots,\mu(g),\mu(g) \lambda^{-1})$.
	\item[] \textit{Output}: matrix $g_4=\mathrm{diag}(1,\ldots,1,\mu(g),\ldots,\mu(g))$, where $\mu(g)\in k^{\times}$.
	\item[] \textit{Justification}: Using Lemma~\ref{lemma7}.
\end{enumerate}
\subsubsection{Gaussian elimination for $\GO(2l+1,k)$}\label{gaussodd}
The algorithm is as follows:\\

\noindent
\vspace{3mm}
Step $1$:
\vspace{-14.2mm}
\begin{enumerate}
	\item[] \textit{Input}: A matrix $g=\begin{pmatrix}\alpha &X&Y\\E&A&B\\F&C&D\end{pmatrix} \in \GO(2l+1,k)$.
	\item[] \textit{Output}: The matrix $g_1=\begin{pmatrix}\alpha_1 &X_1&Y_1\\E_1&A_1&B_1\\F_1&C_1&D_1\end{pmatrix}$ 
	is one of the following kind:
	\begin{enumerate}
		\item The matrix $A_1$ is a diagonal matrix $\mathrm{diag}(1,\ldots,1,\lambda)$ with $\lambda \neq 0$.
		\item The matrix $A_1$ is a diagonal matrix $\mathrm{diag}(\underbrace{1,\ldots,1}_{m},\underbrace{0,\ldots,0}_{l-m}) (m<l)$.
	\end{enumerate}
	\item[] \textit{Justification}: Using ER$1$ and EC$1$, we do the 
	classical Gaussian elimination on a 
	$l\times l$ matrix $A$.
\end{enumerate}
\vspace{3.0mm}
Step $2$:
\vspace{-10.5mm}
\begin{enumerate}
	\item[] \textit{Input}: matrix $g_1=\begin{pmatrix}\alpha_1 &X_1&Y_1\\E_1&A_1&B_1\\F_1&C_1&D_1\end{pmatrix}$.
	\item[] \textit{Output}: matrix $g_2=\begin{pmatrix}\alpha_2 &X_2&Y_2\\E_2&A_2&B_2\\F_2&C_2&D_2\end{pmatrix}$ 
	is one of the following kind:
	\begin{enumerate}
		\item The matrix $A_2$ is $\mathrm{diag}(1,\ldots,1,\lambda)$ with $\lambda \neq 0, X_2=0=E_2$, and 
		$C_2=\begin{pmatrix}C_{11}&-\lambda \tr C_{21}\\C_{21}&0\end{pmatrix}$, where $C_{11}$ is skew-symmetric of size 
		$l-1$.
		\item The matrix $A_2$ is $\mathrm{diag}(\underbrace{1,\ldots,1}_{m},\underbrace{0,\ldots,0}_{l-m}) (m<l)$; 
		$X_2, E_2$ have first $m$ entries $0$, and $C_2=\begin{pmatrix}C_{11}&0\\C_{21}&C_{22}\end{pmatrix}$, 
		where $C_{11}$ is an $m\times m$ skew-symmetric matrix.
	\end{enumerate}
	
	\item[] \textit{Justification}: Once we have $A_1$ in diagonal form, we use ER$4$ and EC$4$ to change 
	$X_1$ and $E_1$ to the required form. Then Lemma~\ref{lemma8} makes sure that $C_1$ has the required form.
\end{enumerate}
\vspace{2mm}
\noindent
Step $3$:
\vspace{-10.3mm}
\begin{enumerate}
	\item[] \textit{Input}: matrix $g_2=\begin{pmatrix}\alpha_2 &X_2&Y_2\\E_2&A_2&B_2\\F_2&C_2&D_2\end{pmatrix}$.
	\item[] \textit{Output}: 
	\begin{enumerate}
		\item matrix $g_3=\begin{pmatrix}\alpha_3 &0&Y_3\\0&A_3&B_3\\F_3&0&D_3\end{pmatrix};\quad A_3=
		\mathrm{diag}(1,\ldots,1,\lambda)$.
		\item matrix $g_3=\begin{pmatrix}\alpha_3 &X_3&Y_3\\E_3&A_3&B_3\\F_3&C_3&D_3\end{pmatrix};
		\quad  A_3=\mathrm{diag}(\underbrace{1,\ldots,1}_{m},\underbrace{0,\ldots,0}_{l-m})$; $X_3, E_3$ have first 
		$m$ entries $0$, and $C_3=\begin{pmatrix}0&0\\C_{21}&C_{22}\end{pmatrix}$.
	\end{enumerate}
	\item[] \textit{Justification}: Observe the effect of ER$3$, and Lemma~\ref{lemma5} ensures the 
	required form.
\end{enumerate}
\vspace{2.0mm}
\noindent
Step $4$:
\vspace{-11.0mm}
\begin{enumerate}
	\item[] \textit{Input}: matrix $g_3=\begin{pmatrix}\alpha_3 &X_3&Y_3\\E_3&A_3&B_3\\F_3&C_3&D_3\end{pmatrix}$.
	\item[] \textit{Output}: matrix $g_4=\begin{pmatrix}\alpha_4&0&0\\0&A_4&B_4\\0&0&\mu(g)A_4^{-1}\end{pmatrix}$ with 
	$A_4=\mathrm{diag}(1,\ldots,1,\lambda), \alpha_4^2=\mu(g)$, and 
	$B_4A_4+A_4\tr B_4=0$.
	
	\item[] \textit{Justification}: In the first case, Lemma~\ref{lemma10} ensures the required form. 
	In the second case, we interchange $i$ with $-i$ for $m+1\leq i \leq l$. This will make $C_3=0$.
	Then, if needed, we use ER$1$ and EC$1$ on $A_3$ to make it diagonal. Then Lemma~\ref{lemma9} 
	ensures that $A_3$ has full rank. Further, we can use ER$4$ and EC$4$ to make $X_3=0=E_3$. 
	Lemma~\ref{lemma10} gives the required form.
\end{enumerate}
\vspace{2.2mm}
\noindent
Step $5$:
\vspace{-11mm}
\begin{enumerate}
	\item[] \textit{Input}: matrix $g_4=\begin{pmatrix}\alpha_4&0&0\\0&A_4&B_4\\0&0&\mu(g)A_4^{-1}\end{pmatrix}; \quad 
	A_4=\mathrm{diag}(1,\ldots,1,\lambda), \alpha_4^2=\mu(g)$.
	\item[] \textit{Output}: matrix $g_5=\mathrm{diag}(\alpha_5,1,\ldots,1,\lambda,\mu(g),\ldots,\mu(g),\mu(g)\lambda^{-1})$
	with $\alpha_5^2=\mu(g)$.
	\item[] \textit{Justification}: Lemma~\ref{lemma10} ensures that $B_4$ is of a certain kind. We 
	can use ER$2$ to make $B_4=0$. 
\end{enumerate}

\subsection{Gaussian elimination algorithm for twisted orthogonal similitude groups}
Over a finite field $k=\mathbb{F}_q$, there are two types of even dimensional orthogonal groups. We covered one type in the previous section. In this section, we consider the other types. 
The algorithm, for the twisted orthogonal similitude group, is similar to the odd orthogonal similitude case, so we will be very brief here.
\subsubsection{Elementary matrices for $\GO^{-}(2l,q)$}\label{elementarymattwist}
We index rows and columns by $1, -1,2,\ldots , l, -2, \ldots, -l$. 
The elementary matrices are defined as follows:
\begin{eqnarray*}
x_{i,j}(t)=& I + t(e_{i,j} - e_{-j,-i})& \text{for $i \neq j$},\\
x_{i,-j}(t) =& I + t(e_{i,-j} - e_{j,-i})& \text{for $i < j$},\\
x_{-i,j}(t) =& I + t(e_{-i,j} - e_{-j,i})& \text{for $i < j$},\\
x_{i,1}(t) =& I + t(-2e_{-i,1}+e_{1,i}) - t^2 e_{-i,i}, \\
x_{1,i}(t) =& I + t(2e_{i,1}-e_{1,-i}) - t^2 e_{i,-i}, \\
x_{i,-1}(t) =& I + t(-2\epsilon e_{-i,-1}+e_{-1,i}) -\epsilon t^2 e_{-i,i}, \\
x_{-1,i}(t)=& I + t(2\epsilon e_{i,-1}-e_{-1,-i}) -\epsilon t^2 e_{i,-i}, \\
w_i =& I - e_{i,i} - e_{-i,-i} - e_{i,-i} - e_{-i,i},\\ 
x_1(t,s)=& I+(t-1)e_{1,1}-(t+1)e_{-1,-1}+s(e_{-1,1}+\epsilon e_{1,-1}),\\
x_2=& I-2e_{-1,-1}.
\end{eqnarray*}
Let $g=\begin{pmatrix}A_0 &X&Y\\E&A&B\\F&C&D\end{pmatrix} \in \GO^{-}(2l,q)$, where $A_0$ is a $2\times 2$ matrix, $X, Y$ are $2\times (l-1)$ matrix, $E, F$ are $(l-1)\times 2$ matrix and $A, B, C, D$ are $(l-1)\times (l-1)$ matrix respectively. Let us note the effect of multiplying $g$ by elementary matrices in the following table.  
\begin{table}[ht]
	\caption{The elementary operations for $\mathrm{GO}^{-}(2l,q)$} \label{Table 4} \vskip 0.2 cm
	\centering 
	\begin{tabular}{|c c ||c c|} 
		\hline 
		& Row operations & &Column operations  \\ [0.5ex]
		\hline 
		ER1 & $i^{\text{th}} \mapsto i^{\text{th}}+t j^{\text{th}}$ row and & EC1 & $j^{\text{th}} \mapsto j^{\text{th}}+t i^{\text{th}}$ column and \\
		& $-j^{\text{th}} \mapsto -j^{\text{th}}-t (-i)^{\text{th}}$  row &  &$-i^{\text{th}} \mapsto -i^{\text{th}}-t(-j)^{\text{th}}$  column\\
		\hline
		ER2 & $i^{\text{th}} \mapsto i^{\text{th}}+t (-j)^{\text{th}}$ row and & EC2 & $-i^{\text{th}} \mapsto -i^{\text{th}}-tj^{\text{th}}$ column and \\
		&$j^{\text{th}} \mapsto j^{\text{th}}-t (-i)^{\text{th}}$  row & &$-j^{\text{th}} \mapsto -j^{\text{th}}+ti^{\text{th}}$  column\\
		\hline
		ER3 &$-i^{\text{th}} \mapsto -i^{\text{th}}-tj^{\text{th}}$ row and & EC3 & $j^{\text{th}} \mapsto j^{\text{th}}+t(-i)^{\text{th}}$ column and \\
		&$-j^{\text{th}} \mapsto -j^{\text{th}}+ti^{\text{th}}$  row & & $i^{\text{th}} \mapsto i^{\text{th}}-t(-j)^{\text{th}}$  column\\
		\hline
		ER4 & $1^{\text{st}} \mapsto 1^{\text{st}}-t(-i)^{\text{th}}$ row and & EC4 & $1^{\text{st}} \mapsto 1^{\text{st}}+2ti^{\text{th}}$ column and \\
		&$i^{\text{th}} \mapsto i^{\text{th}}+2t1^{\text{st}}-t^{2}(-i)^{\text{th}}$  row & & $-i^{\text{th}} \mapsto -i^{\text{th}}-t1^{\text{st}}-t^{2}i^{\text{th}}$  column\\
		\hline
		ER4a &$1^{\text{st}} \mapsto 1^{\text{st}}+ti^{\text{th}}$ row and & EC4a & $1^{\text{st}} \mapsto 1^{\text{st}}-2t(-i)^{\text{th}}$ column and \\
		& $-i^{\text{th}} \mapsto -i^{\text{th}}-2t1^{\text{st}}-t^{2}i^{\text{th}}$  row 
		& &$i^{\text{th}} \mapsto i^{\text{th}}+t1^{\text{st}}-t^{2}(-i)^{\text{th}}$  column\\
		\hline
		ER5 & $-1^{\text{th}} \mapsto -1^{\text{th}}-t(-i)^{\text{th}}$ row and & EC5 & $-1^{\text{th}} \mapsto -1^{\text{th}}+2\epsilon ti^{\text{th}}$ column and \\
		&$i^{\text{th}} \mapsto i^{\text{th}}+2\epsilon t(-1)^{\text{th}}-\epsilon t^{2}(-i)^{\text{th}}$  row & & $-i^{\text{th}} \mapsto -i^{\text{th}}-t(-1)^{\text{th}}-\epsilon t^{2}i^{\text{th}}$  column\\
		\hline
		ER5a &$-1^{\text{th}} \mapsto -1^{\text{th}}+ti^{\text{th}}$ row and & EC5a & $-1^{\text{th}} \mapsto -1^{\text{th}}-2\epsilon t(-i)^{\text{th}}$ column and \\
		& $-i^{\text{th}} \mapsto -i^{\text{th}}-2\epsilon t(-1)^{\text{th}}-\epsilon t^{2}i^{\text{th}}$  row 
		& &$i^{\text{th}} \mapsto i^{\text{th}}+t(-1)^{\text{th}}-\epsilon t^{2}(-i)^{\text{th}}$  column\\
		\hline
		$ w_{i}$ & Interchange $i^{th}$ and $(-i)^{th}$ row &$w_{i}$& Interchange $i^{th}$ and $(-i)^{th}$ column \\
		\hline
	\end{tabular}
\end{table}

The main reason the following algorithm works is the closed condition $\tr g\beta g=\mu(g)\beta$, which gives the following equations:
\begin{align}
\tr A_0\beta_0X+\tr EC+\tr FA &=0,\\
\tr A_0\beta_0Y+\tr ED+\tr FB &=0,\\
\tr X\beta_0X+\tr AC+\tr CA &=0,\\
\tr Y\beta_0Y+\tr BD+\tr DB &=0,
\end{align}
\begin{align}
\tr A_0\beta_0A_0+\tr EF+\tr FE &=\mu(g)\beta_0, \\
\tr X\beta_0Y+\tr AD+\tr CB &=\mu(g)I_{l-1}.
\end{align}
\subsubsection{Gaussian elimination algorithm for $\GO^{-}(2l,q)$}\label{gausstwist}
The algorithm is as follows:

\vspace{4mm}
\noindent
Step $1$: 
\vspace{-11mm}
\begin{enumerate}
	\item[] \textit{Input}: A matrix $g=\begin{pmatrix}A_0 &X&Y\\E&A&B\\F&C&D\end{pmatrix} \in \GO^{-}(2l,k)$.
	\item[] \textit{Output}: The matrix $g_1=\begin{pmatrix}(A_0)_1 &X_1&Y_1\\E_1&A_1&B_1\\F_1&C_1&D_1\end{pmatrix}$ 
	is one of the following kind:
	\begin{enumerate}
		\item The matrix $A_1$ is a diagonal matrix $\mathrm{diag}(1,\ldots,1,\lambda)$ with $\lambda \neq 0$.
		\item The matrix $A_1$ is a diagonal matrix $\mathrm{diag}(1,\ldots,1,0,\ldots,0)$ with number of $1$s
		equal to $m (<l-1)$.
	\end{enumerate}
	\item[] \textit{Justification}: Using ER$1$ and EC$1$ we do the classical Gaussian elimination on a 
	$(l-1)\times (l-1)$ matrix $A$.
\end{enumerate}
\vspace{3mm}
Step $2$: 
\vspace{-12.2mm}
\begin{enumerate}
	\item[] \textit{Input}: matrix $g_1=\begin{pmatrix}(A_0)_1 &X_1&Y_1\\E_1&A_1&B_1\\F_1&C_1&D_1\end{pmatrix}$.
	\item[] \textit{Output}: matrix $g_2=\begin{pmatrix}(A_0)_2 &0&Y_2\\E_2&A_2&B_2\\0&0&D_2\end{pmatrix}$, where $A_2=\mathrm{diag}\,(1,\ldots, 1, \lambda)$.
	
	\item[] \textit{Justification}: Observe the effect of ER$3$. In the first case, $C_1$ becomes zero matrix. In the second case, first interchange all zero rows of $A_1$ with the corresponding rows of $C_1$ using $w_i$.  This will make $C_1=0$. Then if needed use ER$1$ and EC$1$ on $A_1$ to make it diagonal. From the above equations we get $X_1=0$, $\tr AD=\mu(g)I_{l-1}$, and $F_1=0$ which ensures that $A_2$ has full rank. 
\end{enumerate}
\vspace{3.5mm}
Step $3$: 
\vspace{-11.5mm}
\begin{enumerate}
	\item[] \textit{Input}: matrix $g_2=\begin{pmatrix}(A_0)_2 &0&Y_2\\E_2&A_2&B_2\\0&0&\mu(g)A_2^{-1}\end{pmatrix}$.
	\item[] \textit{Output}: 
	matrix $g_3=\begin{pmatrix}(A_0)_3 &0&0\\0&A_3&B_3\\0&0&\mu(g)A_3^{-1}\end{pmatrix}$, where $A_3=
	\mathrm{diag}(1,\ldots,1,\lambda)$.
	
	\item[] \textit{Justification}: Use EC$4$ and EC$5$ to make $E_2=0$. Then from the above equation we get $Y_2=0$.
\end{enumerate}
\vspace{4mm}
Step $4$:
\vspace{-11.0mm}
\begin{enumerate}
	\item[] \textit{Input}: matrix $g_3=\begin{pmatrix}(A_0)_3 &0&0\\0&A_3&B_3\\0&0&\mu(g)A_3^{-1}\end{pmatrix}$
	\item[] \textit{Output}: matrix $g_4=\begin{pmatrix}(A_0)_4&0&0\\0&A_4&0\\0&0&\mu(g)A_4^{-1}\end{pmatrix}$, where 
	$A_4=\mathrm{diag}(1,\ldots,1,\lambda)$ and $(A_0)_4=\begin{pmatrix}t&\epsilon s\\s&-t\end{pmatrix}$ or $ \begin{pmatrix}t&-\epsilon s\\s&t\end{pmatrix}$ with $t^2+\epsilon s^2=\mu(g)=\mu(A_0)$.
	
	\item[] \textit{Justification}: We can use ER$2$ to make $B_3$ a zero matrix. If $(A_0)_4=\begin{pmatrix}t&\epsilon s\\s&-t\end{pmatrix}$, then the algorithm terminates here, otherwise go to the next step.
\end{enumerate}
\vspace{3mm}
Step $5$:
\vspace{-11mm}
\begin{enumerate}
	\item[] \textit{Input}: matrix $g_4=\begin{pmatrix}(A_0)_4&0&0\\0&A_4&0\\0&0&\mu(g)A_4^{-1}\end{pmatrix}$ with 
	$A_4=\mathrm{diag}(1,\ldots,1,\lambda)$ and $(A_0)_4=\begin{pmatrix}t&-\epsilon s\\s&t\end{pmatrix}$. 
	\item[] \textit{Output}: matrix $g_5=\mathrm{diag}(I_2,1,\ldots,1,\lambda,\mu(g),\ldots,\mu(g),\mu(g)\lambda^{-1})$.
	
	\item[] \textit{Justification}: Now using the elementary matrices $x_1(a,c)$ and $x_2$ we can reduce $g_4$ to the above form.	
\end{enumerate}
\subsection{Proof of~\ref{maintheorem}}\label{proofmaintheorem}
The proof follows from the above algorithms described in  Sections~\ref{gausseven},~\ref{gaussodd} and~\ref{gausstwist}. For word length, we mostly count the number of times elementary operations used. 
 We make $A$ a diagonal matrix by elementary operations. This has word length  $\mathcal{O}(l^2)$. In making both $B$ and $C$ a zero-matrix we  multiply two rows by a field
	element and additions. In the worst-case scenario, it has to be done with $\mathcal O(l)$ and $\mathcal O(l^2)$ many times. So the word length is $\mathcal{O}(l^3)$.
In the odd-orthogonal similitude group and twisted orthogonal similitude group we clear $X,Y,E,F$, so this has word length $\mathcal O(l^2)$. There are only a few steps that are independent of $l$. Then clearly, the word length is $\mathcal{O}(l^3)$.
\begin{remark}
The above algorithm works for groups defined over an arbitrary field not necessarily algebraically closed field. For example, 
	\begin{enumerate}
		\item Since all non-degenerate skew-symmetric bilinear forms are equivalent~\cite[Corollary 2.12]{gr}, we have a Gaussian elimination algorithm for all symplectic similitude groups over an arbitrary field. 
		\item Since non-degenerate symmetric bilinear forms over a finite field of odd characteristics are classified~\cite[Page 79]{gr} according to the $\beta$ (see Equations~\eqref{beta2} and~\eqref{twisted_beta}), we have a Gaussian elimination algorithm for all orthogonal similitude groups over a finite field of odd characteristics.
		\item Furthermore, we have a Gaussian elimination algorithm for orthogonal similitude groups that are given by the above bilinear form (see Equation~(\ref{beta2})) over an arbitrary field.
		\item For simplicity, we assume that  $\mathrm{char}\;(k)\neq 2$, though our algorithm works well on fields of all characteristics for symplectic and orthogonal similitude groups.
		Algorithms that we develop in this paper work only for a given bilinear form $\beta$ (see Equations (2.1)-(2.3)). Though in our algorithm, we work with only one bilinear form $\beta$, given by a fixed basis, with a suitable change of basis matrix our algorithm works for all  equivalent bilinear forms. 
	\end{enumerate}
	
\end{remark}
\section{Applications}
\subsection{Computing spinor norm for orthogonal groups}\label{spinornorm}
In this section, we show how we can use our Gaussian elimination algorithm to compute the spinor norm for orthogonal groups. Throughout this section, we assume that the field $k$ is of odd or zero characteristic. 
\begin{lemma}\label{spinornorm1}
For the group $\mathrm{O}(2l,k)$ or $\mathrm{O}(2l+1,k)$, we have 
 \begin{enumerate}
  \item $\Theta(x_{i,j}(t))=\Theta(x_{i,-j}(t))=\Theta(x_{-i,j}(t))=\overline{1}$. Furthermore, in the odd-dimensional case we also have $\Theta(x_{i,0}(t))=\Theta(x_{0,i}(t))=\overline 1$.
  \item $\Theta(w_l)=\overline{1}$.
  \item $\Theta(\diag(1,\ldots,1,\lambda,1,\ldots,1,\lambda^{-1}))=\overline{\lambda}$.
 \end{enumerate}
\end{lemma}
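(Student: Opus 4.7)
The plan is to treat the three assertions with complementary tools, all drawn from Proposition~\ref{propospinor}: unipotence for (1), a single-reflection identification for (2), and a two-reflection decomposition for (3). All the relevant non-trivial action will be concentrated in the hyperbolic plane spanned by $e_l$ and $e_{-l}$, so the computations will reduce to a two-dimensional check using $\beta(e_l, e_{-l}) = 1$.

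For part (1), I would first verify that each elementary matrix $I + N$ in the list is unipotent by squaring $N$ and using the identity $e_{i,j}e_{k,l} = \delta_{jk}e_{i,l}$. In each case the cross terms cancel; in $x_{i,0}(t)$ and $x_{0,i}(t)$ the correction $-t^{2} e_{i,-i}$ (respectively $-t^{2} e_{-i,i}$) is precisely what absorbs the nonzero square of the linear part. Proposition~\ref{propospinor}(2) then gives $\Theta = \overline{1}$. For part (2), I would check that $w_l$ fixes every basis vector outside $\{e_l, e_{-l}\}$ and sends $e_l + e_{-l} \mapsto -(e_l + e_{-l})$, identifying $w_l$ with the reflection $\rho_{e_l+e_{-l}}$. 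Proposition~\ref{propospinor}(1) then yields $\Theta(w_l) = \overline{\beta(e_l+e_{-l}, e_l+e_{-l})/2} = \overline{1}$.

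Part (3) is the substantive step. I would write $h = \diag(1,\ldots,1,\lambda,1,\ldots,1,\lambda^{-1})$ as the product of two reflections in the hyperbolic plane $\langle e_l, e_{-l}\rangle$, specifically
\[
h \;=\; \rho_{e_l - e_{-l}} \circ \rho_{e_l - \lambda e_{-l}}.
\]
The verification rests on the quick fact that $\rho_{e_l - \mu e_{-l}}$ restricts to $e_l \mapsto \mu e_{-l}$ and $e_{-l} \mapsto \mu^{-1} e_l$ while acting as the identity on the orthogonal complement. Applying Proposition~\ref{propospinor}(1) with $\beta(e_l - e_{-l}, e_l - e_{-l}) = -2$ and $\beta(e_l - \lambda e_{-l}, e_l - \lambda e_{-l}) = -2\lambda$ then gives $\Theta(h) = \overline{(-1)(-\lambda)} = \overline{\lambda}$.

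The main obstacle is guessing the correct two-reflection factorisation in (3); once written down, the check is essentially a $2 \times 2$ calculation. An alternative route would be to compute Wall's form $[\,,\,]_h$ directly on $V_h = \langle e_l, e_{-l}\rangle$ and read off its discriminant, but this involves essentially the same bookkeeping.
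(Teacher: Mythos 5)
Your proof is correct and follows essentially the same route as the paper: unipotence via Proposition~\ref{propospinor}(2) for part (1), the identification $w_l=\rho_{e_l+e_{-l}}$ for part (2), and a two-reflection factorisation in the hyperbolic plane $\langle e_l,e_{-l}\rangle$ for part (3). The only difference is cosmetic: the paper writes $\diag(1,\ldots,1,\lambda,1,\ldots,1,\lambda^{-1})=\rho_{e_l+e_{-l}}\,\rho_{e_l+\lambda e_{-l}}$, a sign variant of your $\rho_{e_l-e_{-l}}\,\rho_{e_l-\lambda e_{-l}}$, and both factorisations give $\Theta=\overline{\lambda}$ by the same computation.
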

\begin{proof}
We use Proposition~\ref{propospinor}. First claim follows from the
fact that all the elementary matrices $x_{i,j}$  are unipotent. 
 Now the element $w_l=\rho_{(e_l+e_{-l})}$ is a reflection thus $\Theta(w_l)=\frac{1}{2}\overline{\beta(e_l+e_{-l},e_l+e_{-l})}=\overline{1}$.
 
For the third part we note that $\diag(1,\ldots,1,\lambda,1,\ldots,1,\lambda^{-1}) = \rho_{(e_l+e_{-l})} \rho_{(e_l+\lambda e_{-l})}$ and hence the spinor norm $\Theta(\diag(1,\ldots,1,\lambda,1,\ldots,1,\lambda^{-1}) )=\Theta(\rho_{(e_l+\lambda e_{-l})}) =\overline{\frac{\beta(e_l+\lambda e_{-l},e_l+\lambda e_{-l})}{2}}=\overline{\lambda}$.
\end{proof}
\subsection{Proof of~\ref{corollary}}\label{proofcorollary}
Let $g\in\mathrm{O}(2l,k)$ or $\mathrm{O}(2l+1,k)$. From \ref{maintheorem}, we write $g$ as a product of
elementary matrices and a diagonal matrix $\diag(1,\ldots,1,\lambda,1,\ldots,1,\lambda^{-1})$ and hence we can find the spinor norm of $g$ from Lemma~\ref{spinornorm}, i.e., $\Theta(g)=\bar{\lambda}$.

Now we compute the spinor norm in the twisted orthogonal group. First we observe the following:
\begin{lemma}\label{spinornormtwist} The spinor norm of elementary matrices in $\mathrm{O}^{-}(2l, q)$ are the following:
 \begin{enumerate}
  \item $\Theta(x_{i,j}(t))  = \Theta(x_{i,-j}(t)) = \Theta(x_{-i, j}(t))=\Theta(x_{i,1}(t))=\Theta(x_{1,i}(t))=\Theta(x_{i,-1}(t))=\Theta(x_{-1,i}(t)) =\bar{1}$.
  \item $\Theta(w_{i}) = \bar{1}$.
  \item $ \Theta(x_{1}(t,s))=\overline{(1-t)}$ \text{ whenever } $t\neq 1$.
  \item $\Theta(x_{2})=\overline{\frac{1}{2\epsilon}}$
  \item $\Theta(\diag(1,1,1,...,1,\lambda,1,...,1,\lambda^{-1})) = \bar{\lambda}$.
 \end{enumerate}
 \end{lemma}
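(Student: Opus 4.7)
The plan is to handle the five statements case by case, using only Proposition~\ref{propospinor} (spinor norm of a reflection, spinor norm of a unipotent element) together with the fact that $\Theta$ is a homomorphism. Most of the work reduces to either (a) checking unipotence of the given elementary matrix, or (b) recognising it as a reflection, or a short product of reflections, and computing the relevant $\beta(v,v)$ on the twisted form~\eqref{twisted_beta}.

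For part (1), the root-subgroup generators $x_{i,j}(t), x_{i,-j}(t), x_{-i,j}(t)$ with $i,j \in \{2,\dots,l\}$ are trivially of the form $I+N$ with $N^2 = 0$ by the relation $e_{a,b}e_{c,d} = \delta_{b,c}e_{a,d}$. For the remaining four generators involving the anisotropic block, I would compute $N := x-I$ explicitly and show $N^3 = 0$; for instance, for $x_{i,1}(t) = I + t(-2e_{-i,1}+e_{1,i}) - t^2 e_{-i,i}$ one checks via the Kronecker-delta identity that $N^2 = -2t^2 e_{-i,i}$ and hence $N^3 = 0$. Then Proposition~\ref{propospinor}(2) yields $\Theta = \overline{1}$. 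For part (2), I would identify $w_i = \rho_{e_i+e_{-i}}$ directly (valid for $i\geq 2$, where $\beta(e_i,e_{-i}) = 1$), so $\beta(e_i+e_{-i},e_i+e_{-i}) = 2$ and Proposition~\ref{propospinor}(1) gives $\Theta(w_i)=\overline 1$.

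The main computation is part (3). Since $x_1(t,s) \in \mathrm{O}^{-}(2l,q)$, the defining equation $\tr A_0 \beta_0 A_0 = \beta_0$ forces $t^2 + \epsilon s^2 = 1$. Writing $g = x_1(t,s)$, I would note that $\tilde g = I-g$ acts trivially off the $\{1,-1\}$-block, where it is $\begin{pmatrix}1-t & -\epsilon s\\ -s & 1+t\end{pmatrix}$ with determinant $1 - t^2 - \epsilon s^2 = 0$. Hence when $t \neq 1$, the rank of $\tilde g$ is exactly one and $V_g = \langle u\rangle$ with $u = (1-t)e_1 - s e_{-1} = \tilde g(e_1)$. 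By the definition of Wall's form, $[u,u]_g = \beta(u,e_1) = (1-t)\beta(e_1,e_1) - s\beta(e_{-1},e_1) = 1-t$, so the discriminant of the one-dimensional space $V_g$ is $1-t$ and $\Theta(x_1(t,s)) = \overline{1-t}$.

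For parts (4) and (5), the identifications are direct. $x_2 = I - 2e_{-1,-1}$ is exactly the reflection $\rho_{e_{-1}}$ (identity off the $\{1,-1\}$-block, $\mathrm{diag}(1,-1)$ on it), and since $\beta(e_{-1},e_{-1}) = \epsilon$ one obtains $\Theta(x_2) = \overline{\epsilon/2}$. In $k^\times/k^{\times 2}$ this coincides with $\overline{1/(2\epsilon)}$ because their product is $\epsilon^2/4 \in k^{\times 2}$. Finally, for (5) I would copy the argument of Lemma~\ref{spinornorm1}(3) verbatim: decompose $\mathrm{diag}(1,\ldots,1,\lambda,1,\ldots,1,\lambda^{-1}) = \rho_{e_l+e_{-l}}\,\rho_{e_l+\lambda e_{-l}}$, note $\beta(e_l+e_{-l},e_l+e_{-l}) = 2$ and $\beta(e_l+\lambda e_{-l},e_l+\lambda e_{-l}) = 2\lambda$, and multiply to get $\Theta = \overline{1}\cdot\overline{\lambda} = \overline{\lambda}$. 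The only genuinely new ingredient compared to the untwisted case is the rank-one Wall-form calculation in part (3); the rest is the same pattern as Lemma~\ref{spinornorm1}, with the only subtlety being the use of $\beta(e_1,e_1)=1$ and $\beta(e_{-1},e_{-1})=\epsilon$ coming from $\beta_0$.
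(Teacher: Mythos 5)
Your proposal is correct, and for four of the five parts it coincides with the paper's proof: unipotence plus Proposition~\ref{propospinor} for part (1) (the paper simply asserts unipotence, whereas you verify $N^3=0$ for the generators touching the anisotropic block --- a worthwhile extra check, since those are not upper-triangular in any obvious ordering), the reflection identifications $w_i=\rho_{e_i+e_{-i}}$ and $x_2=\rho_{e_{-1}}$ for parts (2) and (4), and the two-reflection decomposition $\rho_{e_l+e_{-l}}\,\rho_{e_l+\lambda e_{-l}}$ for part (5). In part (4) you also make explicit the reconciliation $\overline{\epsilon/2}=\overline{1/(2\epsilon)}$ in $k^\times/k^{\times 2}$, which the paper leaves implicit between its statement and its proof.

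The genuine divergence is part (3). The paper recognises $x_1(t,s)$ as a single reflection, $x_1(t,s)=\rho_{(t-1)e_1+se_{-1}}$, and applies the reflection formula: $\Theta=\overline{((t-1)^2+\epsilon s^2)/2}=\overline{1-t}$ using $t^2+\epsilon s^2=1$. You instead go back to Wall's definition: you show $\tilde g=I-g$ has rank one when $t\neq 1$ (the determinant of its $\{1,-1\}$-block is $1-t^2-\epsilon s^2=0$), so $V_g=\langle \tilde g(e_1)\rangle$, and you compute the one-by-one Wall form $[u,u]_g=\beta(\tilde g(e_1),e_1)=1-t$ directly. Both computations are correct and consistent (indeed a reflection is exactly the rank-one case of Wall's construction, and $\beta(v,v)/2=(2(1-t))/2=1-t$ matches your discriminant). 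The paper's route is shorter once the reflection vector is guessed; yours requires no guess and has the advantage of deriving, rather than verifying, the answer --- the rank-one argument would apply verbatim to any orthogonal element $g$ with $\dim V_g=1$. One small point worth stating explicitly in your write-up of part (2): the restriction $i\geq 2$ is not merely where your formula $\beta(e_i,e_{-i})=1$ holds, it is forced, since $w_1$ does not lie in $\mathrm{O}^{-}(2l,q)$ at all ($e_1$ and $e_{-1}$ have different lengths $1$ and $\epsilon$); you note this parenthetically, which suffices.
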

 Proof: First one follows from Proposition~\ref{propospinor} as all the elementary matrices $x_{i,j}$ are unipotent. 
 The element $w_{i} = \rho_{e_{i}+e_{-i}}$ is a reflection thus 
 $\Theta(w_{i})= \overline{\frac{\beta(e_{i} +e_{-i},e_i+e_{-i})}{2}} = \bar{1}$.
 
For the third part, we observe that 
$x_{1}(t,s)=\rho_{(t-1)e_{1}+se_{-1}}$ and hence
\begin{align*}
\Theta(x_{1}(t,s))&=\Theta(\rho_{(t-1)e_{1}+se_{-1}})\\
&=\overline{\frac{(t-1)^2+\epsilon s^2}{2}} \\
&=\overline{(1-t)}  \quad \quad \text{as} \quad t^2+\epsilon s^2=1. 
\end{align*}
Note that $x_{2}=\rho_{e_{-1}}$ thus  $\Theta(x_{2})=\Theta(\rho_{e_{-1}})=\overline{\frac{\beta(e_{-1},e_{-1})}{2}}=\overline{\frac{\epsilon}{2}}$. 

For the last part, we note that
$\diag(1,1,1,...,1,\lambda,1,...,1,\lambda^{-1}) = \rho_{e_{l}+e_{-l}}\rho_{e_{l}+\lambda e_{-l}}$
and hence
\begin{align*}
\Theta(\diag(1,1,1,...,1,\lambda,1,...,1,\lambda^{-1}))& = \Theta(\rho_{e_{l}+e_{-l}})\Theta(\rho_{e_{l}+\lambda e_{-l}}) \\
&=\overline{\frac{{\beta(e_{l}+\lambda e_{-l},e_l+\lambda e_{-l})}}{2}}\\
&= \bar{\lambda}.
\end{align*}

\subsection{Proof of~\ref{corollary11}}\label{proofcorollary11}
Let $g\in \mathrm{O}^{-}(2l,q)$, then from \ref{maintheorem} we can write $g$ as a product of elementary matrices and a diagonal matrix $\diag(1,1,1,...,1,\lambda,1,...,1,\lambda^{-1})$. As the spinor norm is multiplicative, from Lemma~\ref{spinornormtwist} we get $\Theta(g)=\overline{\lambda(1-t)}$ if our algorithm (see Section~\ref{gausstwist}) terminates at Step 4, otherwise $\Theta(g)=\overline{\frac{\epsilon\lambda(1-t)}{2}}$.

\subsection{Double coset decomposition for Siegel maximal parabolic}\label{parabolicdecomp}

In this section, we compute the double coset decomposition with respect to Siegel maximal parabolic subgroup using our algorithm. Let $P$ be the Siegel maximal parabolic of $G$, where $G$ is either split orthogonal group $\mathrm{O}(n,k)$ or $\mathrm{Sp}(2l,k)$, where char(k) is odd. In Lie theory, a parabolic is obtained by fixing a subset of simple roots~\cite[Section 8.3]{ca}. Siegel maximal parabolic corresponds to the subset consisting of all but the last simple root. 
Geometrically, a parabolic subgroup is obtained as a fixed subgroup of a totally isotropic flag~\cite[Proposition 12.13]{mt}. The Siegel maximal parabolic is the fixed subgroup of  following isotropic flag (with the basis in Section~\ref{basics}): $$\{0\}\subset \{e_1,\ldots,e_l\}\subset V.$$
Thus $P$ is of the form
         $\begin{pmatrix} \alpha&0&Y\\ E&A&B\\F&0&D\end{pmatrix}$ in  $\mathrm{O}(2l+1,k)$ 
         and $\begin{pmatrix} A&B\\0&D\end{pmatrix}$ in
         $\mathrm{Sp}(2l,k)$ and $\mathrm{O}(2l,k)$.

The problem is to get the double coset decomposition $P\backslash G/P$. That is, we want to write $G=\underset{\omega\in\widehat{W}}{\bigsqcup} P\omega P $ as disjoint union, where $\widehat{W}$ is
a finite subset of $G$. Equivalently, given $g\in G$ we need an
algorithm to determine the unique $\omega\in \widehat{W}$ such that $g\in
P\omega P$. 
If $G$ is connected with Weyl group $W$ and suppose $W_P$ is the Weyl group corresponding to $P$ then~\cite[Proposition 2.8.1]{ca2}  
$$ P\backslash G/P \longleftrightarrow W_P\backslash W/W_P.$$
We need a slight variation of this as the orthogonal group is not connected. We define $\widehat{W}$ as follows:

\[\widehat{W}=\left\{\omega_0=I, \omega_i=w_{1}\cdots w_{i}\mid 1\leq i\leq l\right\},\]
where $w_i$ were defined earlier for each class of groups.

\subsection{Proof of~\ref{thmbruhatdecomposition}}\label{proofbruhat}
 In this proof, we proceed with a similar but slightly different Gaussian
 elimination algorithm. Recall that
 $g=\begin{pmatrix}A&B\\C&D\end{pmatrix}$ whenever $g$ belongs to
 $\text{Sp}(2l,k)$ or $\text{O}(2l,k)$ or
 $g=\begin{pmatrix}\alpha&X&Y\\ E&A&B\\F&C&D\end{pmatrix}$ whenever
 $g$ belongs to $\text{O}(2l+1,k)$. In our algorithm, we made $A$ into a diagonal
 matrix. Instead of that, we can use elementary operations ER1 and EC1 to
 make $C$ into a diagonal matrix and then do the row interchange to
 make $A$ into a diagonal matrix and $C$ a zero matrix. If we do that,
 we note that elementary matrices $x_{i,j}(t)$ and $x_{i,-j}(t)$ are in
 $P$. The proof is just keeping track of elements of
 $P$ in this Gaussian elimination algorithm. The Step 1 in the
 algorithm says that there are elements $p_1, p_2\in P$
 such that $p_1gp_2 = \begin{pmatrix} A_1&B_1\\C_1&D_1\end{pmatrix}$, 
 where $C_1$ is a diagonal matrix with $m$ non-zero entries. Clearly, 
 $m=0$ if and only if $g\in P$. In that case, $g$ is in the double
 coset $P\omega_0P=P$. Now suppose $m\geq 1$. Then in Step 2 we
 multiply by $x_{i,-j}(t)$ to make the first $m$ rows of $A_1$ zero, i.e., there is a $p_3\in P$ such that $p_3p_1gp_2 = \begin{pmatrix} \tilde A_1 & \tilde B_1\\C_1&D_1\end{pmatrix}$, where first $m$ rows of $\tilde A_1$ are zero. After this, we interchange rows $i$ with $-i$ for $1\leq i\leq m$ which makes $C_1$ zero, i.e., multiplying by $\omega_m$ we get  $\omega_m p_3p_1gp_2=\begin{pmatrix} A_2&B_2\\0&D_2\end{pmatrix}\in P$. Thus $g\in P\omega_m P$.
 
 For $\mathrm{O}(2l+1,k)$, we note that the elementary matrices $x_{i,j}(t), x_{i,-j}(t)$
 and $x_{i,0}(t)$ are in $P$. Rest of the proof is similar to the earlier case
 and follows by carefully keeping track of elementary matrices used in
 our algorithm in Section~\ref{wordproblem}.
 
 \medskip
 \textbf{Acknowledgement:} We are indebted to the anonymous referees for their careful reading. Their comments and suggestions improved this paper. This work was supported by a SERB research grant.

\bibliographystyle{amsplain}
\bibliography{biblio_gauss_alg}
\nocite{shinde}
\end{document}